\newcommand{\xmark}{\ding{55}} 
\theoremstyle{plain}
\newtheorem{theorem}{Theorem} 
\newtheorem{lemma}[theorem]{Lemma}
\theoremstyle{definition}
\newtheorem{remark}[theorem]{Remark}
\newtheorem{definition}[theorem]{Definition}
\newcommand{\Ge}{\boldsymbol{\mathscr{G}}}
\newcommand{\Xe}{\boldsymbol{\mathscr{X}}}
\newcommand{\Ze}{\boldsymbol{\mathscr{Z}}}
\newcommand{\F}{\textup{F}}
\newcommand{\zerobf}[0]{\bm{0}}
\newcommand{\Omegabf}[0]{\bm{\Omega}}
\newcommand{\Sigmabf}[0]{\bm{\Sigma}}
\newcommand{\Abf}[0]{\bm{A}}
\newcommand{\Bbf}[0]{\bm{B}}
\newcommand{\Cbf}[0]{\bm{C}}
\newcommand{\Gbf}[0]{\bm{G}}
\newcommand{\Ibf}[0]{\bm{I}}
\newcommand{\Mbf}[0]{\bm{M}}
\newcommand{\Qbf}[0]{\bm{Q}}
\newcommand{\Rbf}[0]{\bm{R}}
\newcommand{\Sbf}[0]{\bm{S}}
\newcommand{\Ubf}[0]{\bm{U}}
\newcommand{\Vbf}[0]{\bm{V}}
\newcommand{\Wbf}[0]{\bm{W}}
\newcommand{\Xbf}[0]{\bm{X}}
\newcommand{\Ybf}[0]{\bm{Y}}
\newcommand{\Zbf}[0]{\bm{Z}}
\newcommand{\ebf}[0]{\bm{e}}
\newcommand{\pbf}[0]{\bm{p}}
\newcommand{\qbf}[0]{\bm{q}}
\newcommand{\vbf}[0]{\bm{v}}
\newcommand{\xbf}[0]{\bm{x}}
\newcommand{\ybf}[0]{\bm{y}}
\newcommand\Eb{\mathbb{E}}
\newcommand\Pb{\mathbb{P}}
\newcommand\Rb{\mathbb{R}}
\newcommand{\Dc}[0]{\mathcal{D}}
\DeclareMathOperator*{\argmin}{arg\,min}
\DeclareMathOperator*{\TR}{\operatorname{TR}}
\newcommand{\defeq}{\stackrel{\text{\tiny \textnormal{def}}}{=}}  
\newcommand{\rank}[0]{\operatorname{rank}}
\newcommand{\noiter}{\textup{\#iter}}
\newcommand*{\OPT}{\textup{OPT}}
\newcommand{\range}[0]{\operatorname{range}}
\icmltitlerunning{A Sampling-Based Method for Tensor Ring Decomposition}
\begin{document}

\twocolumn[
\icmltitle{A Sampling-Based Method for Tensor Ring Decomposition}

\icmlsetsymbol{equal}{*}

\begin{icmlauthorlist}
\icmlauthor{Osman Asif Malik}{cu}
\icmlauthor{Stephen Becker}{cu}
\end{icmlauthorlist}

\icmlaffiliation{cu}{Department of Applied Mathematics, University of Colorado Boulder, Boulder, CO, USA}

\icmlcorrespondingauthor{Osman Asif Malik}{osman.malik@colorado.edu}

\icmlkeywords{tensor decomposition, tensor ring, matrix sketching, leverage score sampling, feature extraction}

\vskip 0.3in
]

\printAffiliationsAndNotice{} 

\begin{abstract}
We propose a sampling-based method for computing the tensor ring (TR) decomposition of a data tensor.
The method uses leverage score sampled alternating least squares to fit the TR cores in an iterative fashion.
By taking advantage of the special structure of TR tensors, we can efficiently estimate the leverage scores and attain a method which has complexity sublinear in the number of input tensor entries.
We provide high-probability relative-error guarantees for the sampled least squares problems.
We compare our proposal to existing methods in experiments on both synthetic and real data.
Our method achieves substantial speedup---sometimes two or three orders of magnitude---over competing methods, while maintaining good accuracy.
We also provide an example of how our method can be used for rapid feature extraction.
\end{abstract}

\section{Introduction} \label{sec:intro}

Tensor decomposition has recently found many applications in machine learning and related areas.
Examples include parameter reduction in neural networks \citep{novikov2015, garipov2016, yang2017, yu2017, ye2018}, 
understanding the expressivity of deep neural networks \citep{cohen2016, khrulkov2018expressive},
supervised learning \citep{stoudenmire2017a, novikov2017}, 
filter learning \citep{hazan2005, rigamonti2013},
image factor analysis and recognition \citep{vasilescu2002, liu2019},
scaling up Gaussian processes \citep{izmailov2018},
multimodal feature fusion \citep{hou2019}, 
natural language processing \citep{lei2014}, 
data mining \citep{papalexakis2016},
feature extraction \citep{bengua2015}, 
and tensor completion \citep{wang2017}.
The CP and Tucker decompositions are the two most popular tensor decompositions that each have strengths and weaknesses \citep{kolda2009}. 
The number of parameters in the CP decomposition scales linearly with the tensor dimension, but finding a good decomposition numerically can be challenging.
The Tucker decomposition is usually easier to work with numerically, but the number of parameters grows exponentially with tensor dimension.
To address these challenges, decompositions based on tensor networks have recently gained in popularity.
The tensor train (TT) is one such decomposition \citep{oseledets2011}.
The number of parameters in a TT scales linearly with the tensor dimension.
The tensor ring (TR) decomposition is a generalization of the TT decomposition.
It is more expressive than the TT decomposition (see Remark~\ref{remark:expressiveness}), but maintains the linear scaling of the number of parameters with increased tensor dimension.
Although the TR decomposition is known to have certain numerical stability issues it usually works well in practice, achieving a better compression ratio and lower storage cost than the TT decomposition \citep{mickelin2020}.

It is crucial to have efficient algorithms for computing the decomposition of a tensor.
This task is especially challenging for large and high-dimensional tensors.
In this paper, we present a sampling-based method for TR decomposition which has complexity sublinear in the number of input tensor entries.
The method fits the TR core tensors by iteratively updating them in an alternating fashion.
Each update requires solving a least squares problem.
We use leverage score sampling to speed up the solution of these problems and avoid having to consider all elements in the input tensor for each update.
The system matrices in the least squares problems have a particular structure that we take advantage of to efficiently estimate the leverage scores.
To summarize, we make the following contributions:
\begin{itemize}
	\item Propose a sampling-based method for TR decomposition which has complexity sublinear in the number of input tensor entries.
	\item Provide relative-error guarantees for the sampled least squares problems we use in our algorithm.
	\item Compare our proposed algorithm to four other methods in experiments on both synthetic and real data.
	\item Provide an example of how our method can be used for rapid feature extraction.
\end{itemize}

\section{Related Work} \label{sec:related-work}

Leverage score sampling has been used previously for tensor decomposition.
\citet{cheng2016} use it to reduce the size of the least squares problems that come up in the alternating least squares (ALS) algorithm for CP decomposition.
They efficiently estimate the leverage scores by exploiting the Khatri--Rao structure of the design matrices in these least squares problems and prove additive-error guarantees.
\citet{larsen2020} further improve this method by combining repeatedly sampled rows and by using a hybrid deterministic--random sampling scheme.
These techniques make the method faster without sacrificing accuracy.
Our paper differs from these previous works since we consider a different kind of tensor decomposition.
Since the TR decomposition has core tensors instead of factor matrices, the leverage score estimation formulas used in the previous works cannot be used for TR decomposition.

There are previous works that develop randomized algorithms for TR decomposition.
\citet{yuan2019} develop a method which first applies a randomized variant of the HOSVD Tucker decomposition, followed by a TR decomposition of the core tensor using either a standard SVD- or ALS-based algorithm.
Finally, the TR cores are contracted appropriately with the Tucker factor matrices to get a TR decomposition of the original tensor.
\citet{ahmadi-asl2020} present several TR decomposition algorithms that incorporate randomized SVD. 
In Section~\ref{sec:experiments}, we compare our proposed algorithm to methods by \citet{yuan2019} and \citet{ahmadi-asl2020} in experiments.

Papers that develop randomized algorithms for other tensor decompositions include the works by \citet{wang2015}, \citet{battaglino2018}, \citet{yang2018} and \citet{aggour2020} for the CP decomposition; \citet{biagioni2015} and \citet{malik2020a} for tensor interpolative decomposition; \citet{drineas2007}, \citet{tsourakakis2010}, \citet{dacosta2016}, \citet{malik2018}, \citet{sun2020} and \citet{minster2020} for the Tucker decomposition; \citet{zhang2018} and \citet{tarzanagh2018} for t-product-based decompositions; and \citet{huber2017} and \citet{che2019} for the TT decomposition.

Skeleton approximation and other sampling-based methods for TR decomposition include the works by \citet{espig2012} and \citet{khoo2019}.
\citet{espig2012} propose a skeleton/cross approximation type method for the TR format with complexity linear in the dimensionality of the tensor.
\citet{khoo2019} propose an ALS-based scheme for constructing a TR decomposition based on only a few samples of the input tensor.
Papers that use skeleton approximation and sampling techniques for other types of tensor decomposition include those by \citet{mahoney2008}, \citet{oseledets2008}, \citet{oseledets2010a}, \citet{caiafa2010}, and \citet{friedland2011}.

\section{Preliminaries} \label{sec:preliminaries}

By tensor, we mean a multidimensional array of real numbers.
Boldface Euler script letters, e.g.\ $\Xe$, denote tensors of dimension 3 or greater; bold uppercase letters, e.g.\ $\Xbf$, denote matrices; bold lowercase letters, e.g.\ $\xbf$, denote vectors; and regular lowercase letters, e.g.\ $x$, denote scalars. 
Elements of tensors, matrices and vectors are denoted in parentheses. 
For example, $\Xe(i_1, i_2, i_3)$ is the element on position $(i_1, i_2, i_3)$ in the 3-way tensor $\Xe$, and $\ybf(i_1)$ is the element on position $i_1$ in the vector $\ybf$.
A colon is used to denote all elements along a dimension.
For example, $\Xbf(i,:)$ denotes the $i$th row of the matrix $\Xbf$.
For a positive integer $I$, define $[I] \defeq \{ 1, \ldots, I \}$.
For indices $i_1 \in [I_1], \ldots, i_N \in [I_N]$, the notation $\overline{i_1 i_2 \cdots i_N} \defeq \sum_{n=1}^N i_n \prod_{j=1}^{n-1} I_j$ will be helpful when working with reshaped tensors.
$\|\cdot\|_\F$ denotes the Frobenius norm of matrices and tensors, and $\|\cdot\|_2$ denotes the Euclidean norm of vectors.

\subsection{Tensor Ring Decomposition}

Let $\Xe \in \Rb^{I_1 \times \cdots \times I_N}$ be an $N$-way tensor. For $n \in [N]$, let $\Ge^{(n)} \in \Rb^{R_{n-1} \times I_n \times R_n}$ be 3-way tensors with $R_0 = R_N$.
A rank-$(R_1, \ldots, R_N)$ TR decomposition of $\Xe$ takes the form
\begin{equation} \label{eq:tr-decomposition-elementwise}
	\Xe(i_1, \ldots, i_N) = \sum_{r_1, \ldots, r_N} \prod_{n=1}^N \Ge^{(n)}(r_{n-1}, i_n, r_n),
\end{equation}
where each $r_n$ in the sum goes from $1$ to $R_n$ and $r_0 = r_{N}$. 
$\Ge^{(1)}, \ldots, \Ge^{(N)}$ are called \emph{core tensors}. 
We will use $\TR((\Ge^{(n)})_{n=1}^N)$ to denote a tensor ring with cores $\Ge^{(1)}, \ldots, \Ge^{(N)}$.
The name of the decomposition comes from the fact that it looks like a ring in tensor network\footnote{See \citet{cichocki2016, cichocki2017} for an introduction to tensor networks.} notation; see Figure~\ref{fig:trd}. 
The TT decomposition is a special case of the TR decomposition with $R_0 = R_N = 1$, which corresponds to severing the connection between $\Ge^{(1)}$ and $\Ge^{(5)}$ in Figure~\ref{fig:trd}.
The TR decomposition can therefore also be interpreted as a sum of tensor train decompositions with the cores $\Ge^{(2)},\ldots,\Ge^{(N-1)}$ in common.

\begin{remark}[Expressiveness of TR vs TT] \label{remark:expressiveness}
	Since the TT decomposition is a special case of the TR decomposition, any TT is also a TR with the same number of parameters.
	So the TR decomposition is \emph{at least as expressive} as the TT decomposition. 
	To see that the TR decomposition is \emph{strictly more expressive} than the TT decomposition, consider the following example: 
	Let $\Xe = \TR(\Ge, \Ge, \Ge, \Ge)$ where $\Ge \in \Rb^{2 \times 2 \times 2}$ has entries $1,2,\ldots,8$ ($\Ge = \verb|reshape(1:8,2,2,2)|$ in Matlab).
	This TR of $\Xe$ requires $4\cdot(2\cdot2\cdot2) = 32$ parameters. 
	The TT ranks of $\Xe$ are $(2,4,2)$ (computed via Theorem 8.8 by \citet{ye2018b}).
	An exact TT decomposition of $\Xe$ therefore requires $2\cdot(2\cdot2) + 2\cdot(2\cdot2\cdot4) = 40$ parameters. 
\end{remark}

\begin{figure}[h]
	\centering
	\includegraphics[width=1\columnwidth]{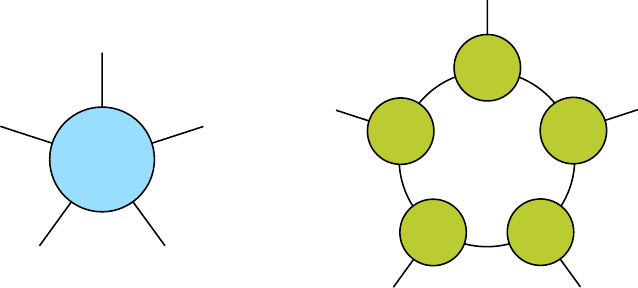}
	\caption{
		Tensor ring decomposition of $5$-way tensor. 
	}
	\label{fig:trd}
	\begin{picture}(0,0)
		\put(-84,76){$\Xe$}
		\put(-58,95){$i_1$}
		\put(-60,58){$i_2$}
		\put(-110,58){$i_3$}
		\put(-110,95){$i_4$}
		\put(-78,110){$i_5$}
		
		\put(-20,78){\Large$=$}
		
		\put(86,88){$\Ge^{(1)}$}
		\put(74,50){$\Ge^{(2)}$}
		\put(34,50){$\Ge^{(3)}$}
		\put(22,88){$\Ge^{(4)}$}
		\put(55,111){$\Ge^{(5)}$}
		\put(106,102){$i_1$}
		\put(94,40){$i_2$}
		\put(22,40){$i_3$}
		\put(10,102){$i_4$}
		\put(64,130){$i_5$}
		\put(95,69){$r_1$}
		\put(59,42){$r_2$}
		\put(21,69){$r_3$}
		\put(33,110){$r_4$}
		\put(81,110){$r_0$}
	\end{picture}
\end{figure}

The problem of fitting $\TR((\Ge^{(n)})_{n=1}^N)$ to a data tensor $\Xe$ can be written as the minimization problem
\begin{equation} \label{eq:TRD-minimization-problem}
	\argmin_{\Ge^{(1)}, \ldots, \Ge^{(N)}} \| \TR((\Ge^{(n)})_{n=1}^N) - \Xe \|_\F,
\end{equation}
where the size of each core tensor is fixed.
There are two common approaches to this fitting problem: one is SVD based and the other uses ALS. 
We describe the latter below since it is what we use in our work, and refer to \citet{zhao2016} and \citet{mickelin2020} for a description of the SVD-based approach.

\begin{definition} \label{def:unfolding}
	The \emph{classical mode-$n$ unfolding} of $\Xe$ is the matrix $\Xbf_{(n)} \in \Rb^{I_n \times \prod_{j \neq n} I_j}$ defined elementwise via
	\begin{equation}
		\Xbf_{(n)} (i_n, \overline{i_1 \cdots i_{n-1} i_{n+1} \cdots i_N}) \defeq \Xe(i_1, \ldots, i_N).
	\end{equation}
	The \emph{mode-$n$ unfolding} of $\Xe$ is the matrix $\Xbf_{[n]} \in \Rb^{I_n \times \prod_{j \neq n} I_j}$ defined elementwise via 
	\begin{equation}
		\Xbf_{[n]} (i_n, \overline{i_{n+1} \cdots i_N i_1 \cdots i_{n-1}} ) \defeq \Xe(i_1, \ldots, i_N).
	\end{equation}
\end{definition}
\begin{definition} \label{def:subchain}
	By merging all cores except the $n$th, we get a \emph{subchain tensor} $\Ge^{\neq n} \in \Rb^{R_n \times \prod_{j \neq n} I_j \times R_{n-1}}$ defined elementwise via
	\begin{equation}
	\begin{aligned}
		&\Ge^{\neq n}(r_n, \overline{i_{n+1} \ldots i_N i_1 \ldots i_{n-1}}, r_{n-1}) \\
		&\defeq \sum_{\substack{r_1, \ldots, r_{n-2}\\r_{n+1}, \ldots, r_N}} \prod_{\substack{j = 1\\j \neq n}}^N \Ge^{(j)}(r_{j-1}, i_j, r_j).
	\end{aligned}
	\end{equation}
\end{definition}
It follows directly from Theorem~3.5 in \citet{zhao2016} that the objective in \eqref{eq:TRD-minimization-problem} can be rewritten as
\begin{equation}
	\| \TR((\Ge^{(n)})_{n=1}^N) - \Xe \|_\F = \| \Gbf_{[2]}^{\neq n} \Gbf_{(2)}^{(n)\top} - \Xbf_{[n]}^\top \|_\F.
\end{equation}
The ALS approach to finding an approximate solution to \eqref{eq:TRD-minimization-problem} is to keep all cores except the $n$th fixed and solve the least squares problem above with respect to that core, and then repeat this multiple times for each $n \in [N]$ until some termination criterion is met.
We summarize the approach in Algorithm~\ref{alg:TR-ALS}.
The initialization on line~\ref{line:tr-als:initialize-cores} can be done by, e.g., drawing each core entry independently from a standard normal distribution.
Notice that the first core does not need to be initialized since it will be immediately updated.
Normalization steps can also be added to the algorithm, but such details are omitted here.
Throughout this paper we make the reasonable assumption that $\Ge^{\neq n} \neq \zerobf$ for all $n \in [N]$ during the execution of the ALS-based methods. 
This is discussed further in Remark~\ref{SUPP:remark:nonzero-cores} in the supplement.

\begin{algorithm}
	\caption{TR-ALS \citep{zhao2016}}
	\label{alg:TR-ALS}
	\DontPrintSemicolon
	\SetAlgoNoLine
	\KwIn{$\Xe \in \Rb^{I_1 \times \cdots \times I_N}$, target ranks $(R_1, \ldots, R_N)$}
	\KwOut{TR cores $\Ge^{(1)}, \ldots, \Ge^{(N)}$}
	Initialize cores $\Ge^{(2)}, \ldots, \Ge^{(N)}$ \label{line:tr-als:initialize-cores}\;
	\Repeat{termination criteria met}{
		\For{$n = 1,\ldots, N$}{
			Compute $\Gbf_{[2]}^{\neq n}$ from cores \label{line:tr-als:compute-subchain}\;
			Update $\Ge^{(n)} = \argmin_{\Ze} \| \Gbf_{[2]}^{\neq n} \Zbf_{(2)}^\top - \Xbf_{[n]}^\top \|_\F$ \label{line:tr-als:ls}\;
		}	
	}
	\Return{$\Ge^{(1)}, \ldots, \Ge^{(N)}$}\;
\end{algorithm}

\subsection{Leverage Score Sampling}

Let $\Abf \in \Rb^{K \times L}$ and $\ybf \in \Rb^K$ where $K \gg L$.
Solving the overdetermined least squares problem $\min_{\xbf} \|\Abf \xbf - \ybf\|_2$ using standard methods costs $O(KL^2)$.
One approach to reducing this cost is to randomly sample and rescale $J \ll K$ of the rows of $\Abf$ and $\ybf$ according to a carefully chosen probability distribution and then solve this smaller system instead.
This will reduce the solution cost to $O(JL^2)$.
\begin{definition} \label{def:leverage-score}
	For a matrix $\Abf \in \Rb^{K \times L}$, let $\Ubf \in \Rb^{K \times \rank(\Abf)}$ contain the left singular vectors of $\Abf$. The $i$th \emph{leverage score} of $\Abf$ is defined as $\ell_i(\Abf) \defeq \|\Ubf(i,:)\|_2^2$ for $i \in [K]$.
\end{definition}
\begin{definition} \label{def:leverage-score-sampling-matrix}
	Let $\qbf \in \Rb^K$ be a probability distribution on $[K]$, and let $\vbf \in [K]^J$ be a random vector with independent elements satisfying $\Pb(\vbf(j) = i) = \qbf(i)$ for all $(i,j) \in [K] \times [J]$.
	Let $\Omegabf \in \Rb^{J \times K}$ and $\Rbf \in \Rb^{J \times J}$ be a random sampling matrix and a diagonal rescaling matrix, respectively, defined as $\Omegabf(j, :) \defeq \ebf_{\vbf(j)}^\top$ and $\Rbf(j,j) \defeq 1/\sqrt{J \qbf(\vbf(j))}$ for each $j \in [J]$, where $\ebf_i$ is the $i$th column of the $K \times K$ identity matrix.
	We say that $\Sbf \defeq \Rbf \Omegabf \in \Rb^{J \times K}$ is a \emph{sampling matrix with parameters $(J, \qbf)$}, or $\Sbf \sim \Dc(J, \qbf)$ for short.
	Let $\Abf \in \Rb^{K \times L}$ be a nonzero matrix, let $\pbf \in \Rb^K$ be a probability distribution on $[K]$ with entries $\pbf(i) \defeq \ell_i(\Abf)/\rank(\Abf)$, and suppose $\beta \in (0,1]$.
	We say that $\Sbf \sim \Dc(J, \qbf)$ is a \emph{leverage score sampling matrix for $(\Abf, \beta)$} if $\qbf(i) \geq \beta \pbf(i)$ for all $i \in [K]$.
\end{definition}

Notice that for any $\Abf$ we have $\sum_i \ell_i(\Abf) = \rank(\Abf)$ and therefore $\sum_i \pbf(i) = 1$ as desired for a probability distribution. 
One can show that if $\Sbf \sim \Dc(J, \qbf)$ is a leverage score sampling matrix for $(\Abf, \beta)$ with $J$ sufficiently large, then $\|\Abf \tilde{\xbf} - \ybf\|_2 \leq (1+\varepsilon) \min_{\xbf} \|\Abf \xbf - \ybf\|_2$ with high probability, where $\tilde{\xbf} \defeq \argmin_{\xbf} \|\Sbf \Abf \xbf - \Sbf \ybf\|_2$.
How large to make $J$ depends on the probability of success and the parameters $\varepsilon$ and $\beta$.
For further details on leverage score sampling, see \citet{mahoney2011} and \citet{woodruff2014}.

\section{Tensor Ring Decomposition via Sampling} \label{sec:method}

We propose using leverage score sampling to reduce the size of the least squares problem on line~\ref{line:tr-als:ls} in Algorithm~\ref{alg:TR-ALS}.
A challenge with using leverage score sampling is computing a sampling distribution $\qbf$ which satisfies the conditions in Definition~\ref{def:leverage-score-sampling-matrix}.
One option is to use $\qbf = \pbf$ and $\beta = 1$, with $\pbf$ defined as in Definition~\ref{def:leverage-score-sampling-matrix}.
However, computing $\pbf$ requires finding the left singular vectors of the design matrix which costs the same as solving the original least squares problem.

When the design matrix is $\Gbf_{[2]}^{\neq n}$, a sampling distribution $\qbf$ can be computed much more efficiently directly from the cores $\Ge^{(1)}, \ldots, \Ge^{(n-1)}, \Ge^{(n+1)}, \ldots, \Ge^{(N)}$ without explicitly forming $\Gbf_{[2]}^{\neq n}$.
For each $n \in [N]$, let $\pbf^{(n)} \in \Rb^{I_n}$ be a probability distribution on $[I_n]$ defined elementwise via\footnote{$\rank(\Gbf_{(2)}^{(n)}) \geq 1$ due to our assumption that $\Ge^{\neq n} \neq \zerobf$ for all $n \in [N]$, so \eqref{eq:p-definition} is well-defined; see Remark~\ref{SUPP:remark:nonzero-cores} in the supplement.}
\begin{equation} \label{eq:p-definition}
	\pbf^{(n)} (i_n) \defeq \frac{\ell_{i_n} (\Gbf_{(2)}^{(n)})}{\rank(\Gbf_{(2)}^{(n)})}.
\end{equation} 
Furthermore, let $\qbf^{\neq n} \in \Rb^{\prod_{j \neq n} I_j}$ be a vector defined elementwise via
\begin{equation} \label{eq:q-definition}
	\qbf^{\neq n}(\overline{i_{n+1} \cdots i_N i_1 \cdots i_{n-1}}) \defeq \prod_{\substack{j = 1\\j \neq n}}^{N} \pbf^{(j)}(i_j).
\end{equation}
\begin{lemma} \label{lemma:leverage-score-sampling}
	Let $\beta_n$ be defined as
	\begin{equation}
		\beta_n \defeq \Big( R_{n-1} R_n \prod_{\substack{j=1\\j \notin \{n-1,n\}}}^N R_j^2 \Big)^{-1}.
	\end{equation}
	For each $n \in [N]$, the vector $\qbf^{\neq n}$ is a probability distribution on $[\prod_{j \neq n} I_j]$, and $\Sbf \sim \Dc(J, \qbf^{\neq n})$ is a leverage score sampling matrix for $(\Gbf_{[2]}^{\neq n}, \beta_n)$.
\end{lemma}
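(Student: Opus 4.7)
The plan is to address the two claims in sequence. That $\qbf^{\neq n}$ is a probability distribution on $[\prod_{j \neq n} I_j]$ is routine: each $\pbf^{(j)}$ is non-negative and sums to $1$ (since $\sum_{i_j} \ell_{i_j}(\Gbf_{(2)}^{(j)}) = \rank(\Gbf_{(2)}^{(j)})$), so $\sum_{\bar{i}} \qbf^{\neq n}(\bar{i})$ factors as $\prod_{j \neq n} \sum_{i_j} \pbf^{(j)}(i_j) = 1$.

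For the sampling claim, the core technical step is the structural factorization
\[
\Gbf_{[2]}^{\neq n} = \Big( \Gbf_{(2)}^{(n+1)} \otimes \Gbf_{(2)}^{(n+2)} \otimes \cdots \otimes \Gbf_{(2)}^{(n-1)} \Big) \Kbf,
\]
where $\Kbf \in \{0,1\}^{\prod_{j \neq n} R_{j-1} R_j \times R_n R_{n-1}}$ enforces matching of the interior rank indices between consecutive cores (the outgoing $r_j$ from core $j$ must equal the incoming $r_j$ of core $j+1$, for each $j \notin \{n-1, n\}$) while leaving $r_n$ and $r_{n-1}$ as the free column indices. Verifying this equality amounts to an elementwise expansion using Definition~\ref{def:subchain}; I expect the main obstacle of the proof to be aligning the multi-index encodings correctly, since the row (and column) orderings produced by the Kronecker product differ from the $\overline{\;\cdot\;}$ orderings of Definitions~\ref{def:unfolding} and~\ref{def:subchain}, and the cyclic TR layout (as opposed to TT) complicates the bookkeeping.

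With the factorization in hand, two standard leverage-score facts yield a row bound. First, $\ell_i(\Abf \Bbf) \leq \ell_i(\Abf)$ for any compatibly-sized $\Abf, \Bbf$: writing orthonormal bases for the nested column spaces as $\Ubf_{\Abf\Bbf} = \Ubf_\Abf \Tbf$ with $\Tbf$ having orthonormal columns, row-norms can only shrink. Second, leverage scores of Kronecker products factorize, $\ell_{\bar{i}}(\Abf_1 \otimes \cdots \otimes \Abf_k) = \prod_m \ell_{i_m}(\Abf_m)$, which drops out of the SVD identity for Kronecker products. Together these give
\[
\ell_{\bar{i}}(\Gbf_{[2]}^{\neq n}) \leq \prod_{j \neq n} \ell_{i_j}(\Gbf_{(2)}^{(j)}).
\]

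To finish, I divide both sides by $\rank(\Gbf_{[2]}^{\neq n})$ and multiply and divide the right-hand side by $\prod_{j \neq n} \rank(\Gbf_{(2)}^{(j)})$. Using $\rank(\Gbf_{[2]}^{\neq n}) \geq 1$ (from $\Ge^{\neq n} \neq \zerobf$) and the crude bound $\rank(\Gbf_{(2)}^{(j)}) \leq R_{j-1} R_j$, the condition $\qbf^{\neq n}(\bar{i}) \geq \beta_n\, \ell_{\bar{i}}(\Gbf_{[2]}^{\neq n})/\rank(\Gbf_{[2]}^{\neq n})$ required by Definition~\ref{def:leverage-score-sampling-matrix} reduces to the numeric inequality $\beta_n \prod_{j \neq n} R_{j-1} R_j \leq 1$. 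Counting multiplicities---each $R_k$ with $k \notin \{n-1, n\}$ appears twice in $\prod_{j \neq n} R_{j-1} R_j$ while $R_{n-1}$ and $R_n$ appear once each---matches (up to the minor cyclic-boundary slack at $R_0 = R_N$) the denominator in the definition of $\beta_n$, so the inequality holds and the lemma follows.
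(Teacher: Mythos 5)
Your proof is correct and follows essentially the same route as the paper's: the explicit factorization $\Gbf_{[2]}^{\neq n} = \bigl(\bigotimes_{j\neq n} \Gbf_{(2)}^{(j)}\bigr)\Kbf$ with a 0/1 index-matching matrix is just a more explicit restatement of the range containment in Lemma~\ref{lemma:subchain-range}, and the two leverage-score facts you invoke are exactly Lemma~\ref{lemma:leverage-score-inequality} and the Kronecker factorization computed in \eqref{eq:leverage-score-kronecker}, after which the rank bounds $\rank(\Gbf_{(2)}^{(j)})\leq R_{j-1}R_j$ and $\rank(\Gbf_{[2]}^{\neq n})\geq 1$ finish the argument identically. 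The only bookkeeping slip is the order of the Kronecker factors: with $i_{n+1}$ varying fastest in $\overline{i_{n+1}\cdots i_N i_1\cdots i_{n-1}}$, the factors must appear as $\Gbf_{(2)}^{(n-1)}\otimes\cdots\otimes\Gbf_{(2)}^{(n+1)}$ (as in the paper) rather than the reverse, though since the final bound is a symmetric product over $j\neq n$ this does not affect the conclusion, and the cyclic count $\prod_{j\neq n}R_{j-1}R_j = \beta_n^{-1}$ holds exactly, not just up to slack.
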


This lemma can now be used to prove the following guarantees for a sampled variant of the least squares problem on line~\ref{line:tr-als:ls} of Algorithm~\ref{alg:TR-ALS}.
\begin{theorem} \label{thm:main}
	Let $\Sbf \sim \Dc(J, \qbf^{\neq n})$, $\varepsilon \in (0,1)$, $\delta \in (0,1)$ and $\tilde{\Ze} \defeq \argmin_{\Ze} \| \Sbf \Gbf_{[2]}^{\neq n} \Zbf_{(2)}^\top - \Sbf \Xbf_{[n]}^\top \|_\F$.
	If 
	\begin{equation} \label{eq:J-bound-2}
		J > \Big( \prod_{j=1}^N R_j^2 \Big) \max \Big( \frac{16}{3(\sqrt{2} - 1)^2} \ln\Big(\frac{4 R_{n-1} R_n}{\delta}\Big), \frac{4}{\varepsilon\delta} \Big),
	\end{equation}
	then the following holds with probability at least $1-\delta$:
	\begin{equation}
		\| \Gbf_{[2]}^{\neq n} \tilde{\Zbf}_{(2)}^\top - \Xbf_{[n]}^\top \|_\F \leq (1+\varepsilon) \min_{\Ze} \| \Gbf_{[2]}^{\neq n} \Zbf_{(2)}^\top - \Xbf_{[n]}^\top \|_\F.
	\end{equation}
\end{theorem}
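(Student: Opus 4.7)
The plan is to combine Lemma~\ref{lemma:leverage-score-sampling} with the standard two-property analysis of leverage score sampling for overdetermined least squares (see, e.g., \citet{mahoney2011, woodruff2014}), adapted to the multi-response Frobenius-norm setting here. Write $\Abf \defeq \Gbf_{[2]}^{\neq n}$, $\Ybf \defeq \Xbf_{[n]}^\top$, let $d \defeq \rank(\Abf) \leq R_{n-1} R_n$, let $\Ubf$ hold the left singular vectors of $\Abf$, let $\Ze^\star$ denote an exact minimizer in \eqref{eq:TRD-minimization-problem}, and let $\Ybf^\perp \defeq \Ybf - \Abf \Zbf_{(2)}^{\star\top}$, so that $\Ubf^\top \Ybf^\perp = \zerobf$. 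The key algebraic observation driving the whole argument is that $d/\beta_n \leq R_{n-1}R_n \cdot R_{n-1}R_n \prod_{j \notin \{n-1,n\}} R_j^2 = \prod_{j=1}^N R_j^2$, which explains the global prefactor in \eqref{eq:J-bound-2}.

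I would then establish two high-probability events, each with failure probability at most $\delta/2$. The first is the \emph{subspace embedding} event $\|\Ibf - \Ubf^\top \Sbf^\top \Sbf \Ubf\|_2 \leq 1 - 1/\sqrt 2$: Lemma~\ref{lemma:leverage-score-sampling} guarantees that each rescaled row of $\Sbf \Ubf$ has squared norm at most $d/(J\beta_n) \leq (\prod_{j=1}^N R_j^2)/J$, so a matrix Chernoff bound applied with tolerance $\alpha = 1 - 1/\sqrt 2$ (which is where the constant $16/(3(\sqrt 2 - 1)^2)$ comes from) delivers this event as soon as $J$ exceeds the first branch of the max in \eqref{eq:J-bound-2}. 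The second is the \emph{approximate matrix multiplication} event $\|\Ubf^\top \Sbf^\top \Sbf \Ybf^\perp\|_\F^2 \leq (\varepsilon/2)\|\Ybf^\perp\|_\F^2$: a row-by-row second-moment computation using $\qbf^{\neq n}(i) \geq \beta_n \|\Ubf(i,:)\|_2^2/d$ gives $\Eb\|\Ubf^\top \Sbf^\top \Sbf \Ybf^\perp\|_\F^2 \leq (d/(J\beta_n))\|\Ybf^\perp\|_\F^2$, after which Markov's inequality only requires $J$ to exceed the second branch of the max.

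On the intersection of these two events (a union bound contributes total failure probability $\delta$), I would close the argument along the familiar route. The sampled normal equations give $\Sbf \Abf(\tilde{\Zbf}_{(2)}^\top - \Zbf_{(2)}^{\star \top}) = P_{\Sbf \Ubf}(\Sbf \Ybf^\perp)$, where $P_{\Sbf \Ubf}$ is the orthogonal projection onto the column space of $\Sbf \Ubf$; the subspace embedding event makes $\Sbf \Abf$ full column rank, bounds $\|((\Sbf \Ubf)^\top \Sbf \Ubf)^{-1}\|_2$ by a universal constant, and lets me transfer between $\|\Sbf \Abf \cdot\|_\F$ and $\|\Abf \cdot\|_\F$ up to a constant factor. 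Combining this with the approximate matrix multiplication control on $\|(\Sbf \Ubf)^\top \Sbf \Ybf^\perp\|_\F^2$ and the original-space Pythagorean identity $\|\Abf \tilde{\Zbf}_{(2)}^\top - \Ybf\|_\F^2 = \|\Ybf^\perp\|_\F^2 + \|\Abf(\tilde{\Zbf}_{(2)}^\top - \Zbf_{(2)}^{\star \top})\|_\F^2$ yields the claimed $(1+\varepsilon)$ relative-error bound. I expect the main obstacle to be less any single deep step than the bookkeeping needed to (i) extend the single-right-hand-side analyses in the literature to the multi-response Frobenius-norm setting used here and (ii) align the matrix Chernoff and Markov constants so that the final threshold on $J$ matches \eqref{eq:J-bound-2} exactly; the TR-specific content enters only through the collapse $d/\beta_n \leq \prod_{j=1}^N R_j^2$ supplied by Lemma~\ref{lemma:leverage-score-sampling}.
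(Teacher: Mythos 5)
Your proposal is correct and follows essentially the same route as the paper: Lemma~\ref{lemma:leverage-score-sampling} supplies the $\beta_n$ bound with $\rank(\Gbf_{[2]}^{\neq n})/\beta_n \leq \prod_j R_j^2$, the first branch of the max in \eqref{eq:J-bound-2} yields the singular-value (subspace embedding) condition via a matrix Chernoff bound with $\eta=\delta/2$ and tolerance $1-1/\sqrt{2}$ (exactly reproducing the constant $16/(3(\sqrt 2-1)^2)$ and the $4R_{n-1}R_n/\delta$ inside the log), the second branch yields the approximate matrix-multiplication condition via a second-moment bound plus Markov, and a union bound plus the standard structural argument of \citet{drineas2011} (stated in the paper as Lemma~\ref{lemma:structural-conds} for multiple right-hand sides, whose proof you essentially re-derive) closes the bound. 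The only cosmetic difference is that you unpack the structural lemma rather than invoking it as a black box.
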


Proofs of Lemma~\ref{lemma:leverage-score-sampling} and Theorem~\ref{thm:main} are given in Section~\ref{SUPP:sec:proofs} of the supplement.
In Algorithm~\ref{alg:TR-ALS-Sampled} we present our proposed TR decomposition algorithm.

\begin{algorithm}
	\caption{{TR-ALS-Sampled} (proposal)}
	\label{alg:TR-ALS-Sampled}
	\DontPrintSemicolon 
	\SetAlgoNoLine
	\KwIn{$\Xe \in \Rb^{I_1 \times \cdots \times I_N}$, target ranks $(R_1, \ldots, R_N)$}
	\KwOut{TR cores $\Ge^{(1)}, \ldots, \Ge^{(N)}$}
	Initialize cores $\Ge^{(2)}, \ldots, \Ge^{(N)}$ \label{line:tr-als-sampled:initialize-cores}\;
	Compute distributions $\pbf^{(2)}, \ldots, \pbf^{(N)}$ via \eqref{eq:p-definition} \label{line:tr-als-sampled:compute-p}\;
	\Repeat{termination criteria met}{
		\For{$n = 1,\ldots,N$}{
			Set sample size $J$ \label{line:tr-als-sampled:J}\;
			Draw sampling matrix $\Sbf \sim \Dc(J, \qbf^{\neq n})$ \label{line:tr-als-sampled:draw-sketch}\;
			Compute $\tilde{\Gbf}_{[2]}^{\neq n} = \Sbf \Gbf_{[2]}^{\neq n}$ from cores \label{line:tr-als-sampled:sample-G}\;
			Compute $\tilde{\Xbf}_{[n]}^\top = \Sbf \Xbf_{[n]}^\top$ \label{line:tr-als-sampled:sample-X}\;
			Update $\Ge^{(n)} = \argmin_{\Ze} \| \tilde{\Gbf}_{[2]}^{\neq n} \Zbf_{(2)}^\top - \tilde{\Xbf}_{[n]}^\top \|_\F$ \label{line:tr-als-sampled:ls}\;
			Update $n$th distribution $\pbf^{(n)}$ via \eqref{eq:p-definition} \label{line:tr-als-sampled:update-p}\;
		}	
	}
	\Return{$\Ge^{(1)}, \ldots, \Ge^{(N)}$}\;
\end{algorithm}

We draw each core entry independently from a standard normal distribution during the initialization on line~\ref{line:tr-als-sampled:initialize-cores} in our experiments.
The sample size on line~\ref{line:tr-als-sampled:J} can be provided as a user input or be determined adaptively; this is discussed further in Section~\ref{sec:adaptive-sample-size}.
The sampling matrix $\Sbf$ is never explicitly constructed. 
Instead, on line~\ref{line:tr-als-sampled:draw-sketch}, we draw and store a realization of the vector $\vbf \in [\prod_{j \neq n} I_j]^J$ in Definition~\ref{def:leverage-score-sampling-matrix}.
Entries of $\vbf$  can be drawn efficiently using Algorithm~\ref{alg:sampling-q}.

\begin{algorithm}
	\caption{Sampling from $\qbf^{\neq n}$ (proposal)}
	\label{alg:sampling-q}
	\DontPrintSemicolon 
	\SetAlgoNoLine
	\KwIn{Distributions $\{\pbf^{(k)}\}_{k \neq n}$}
	\KwOut{Sample $i \sim \qbf^{\neq n}$}
	\lFor{$j \in [N] \setminus n$}{
		draw  realization $i_j \sim \pbf^{(j)}$
	}
	\Return{$i = \overline{i_{n+1} \cdots i_N i_1 \cdots i_{n-1}}$}\;
\end{algorithm}

The sampling on line~\ref{line:tr-als-sampled:sample-G} in Algorithm~\ref{alg:TR-ALS-Sampled} can be done efficiently directly from the cores without forming $\Gbf_{[2]}^{\neq n}$. 
To see this, note that the matrix row $\Gbf_{[2]}^{\neq n}(i, :)$ is the vectorization of the tensor slice $\Ge^{\neq n}(:, i, :)$ due to Definition~\ref{def:unfolding}.
From Definition~\ref{def:subchain}, this tensor slice in turn is given by the sequence of matrix multiplications
\begin{equation}
\begin{aligned}
\Ge^{\neq n}(:, i, :) &= \Ge^{(n+1)}(:, i_{n+1}, :) \cdots \Ge^{(N)}(:, i_{N}, :)\\
&\cdot \Ge^{(1)}(:, i_1, :) \cdots \Ge^{(n-1)}(:, i_{n-1}, :),
\end{aligned}
\end{equation}
where $i = \overline{i_{n+1} \cdots i_N i_1 \cdots i_{n-1}}$ and each ${\Ge^{(j)}(:, i_{j}, :)}$ is treated as an $R_{j-1} \times R_{j}$ matrix.
For a realization of $\vbf \in [\prod_{j \neq n} I_j]^J$, it is therefore sufficient to extract $J$ lateral slices from each of the cores $\Ge^{(1)}, \ldots, \Ge^{(n-1)}, \Ge^{(n+1)}, \ldots, \Ge^{(N)}$ and then multiply together the $j$th slice from each core, for each $j \in [J]$.
This is illustrated in Figure~\ref{fig:core-sampling}.
An algorithm for computing $\tilde{\Gbf}^{\neq n}_{[2]}$ in this way is given in Algorithm~\ref{alg:compute-sketch-G}.

\begin{figure}[h]
	\centering  
	\includegraphics[width=1\columnwidth]{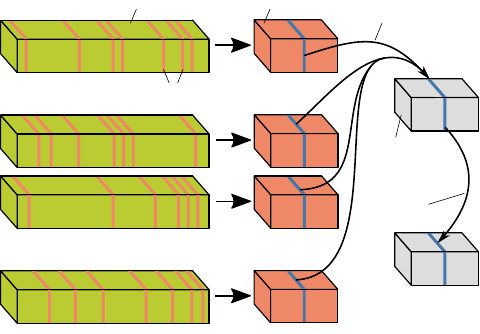}
	\caption{Illustration of how to efficiently construct $\tilde{\Ge}^{\neq n}$ by sampling the core tensors.}
	\label{fig:core-sampling}
	\begin{picture}(0,0)
		\put(-114,200){\footnotesize $\Ge^{(n+1)}$}
		\put(-114,154){\footnotesize $\Ge^{(N)}$}
		\put(-114,86){\footnotesize $\Ge^{(1)}$}
		\put(-114,39){\footnotesize $\Ge^{(n-1)}$}
		
		\put(-65,157){$\vdots$}
		\put(-65,81){$\vdots$}
		\put(24,157){$\vdots$}
		\put(24,81){$\vdots$}
		
		\put(-60,204){{\tiny Full cores}}
		\put(-42,162){{\tiny Samples}}
		
		\put(60,203){{\tiny Multiplication}}
		\put(60,197){{\tiny of lateral slices}}
		
		\put(2,204){{\tiny Sampled cores}}
		
		\put(64,135){{\tiny Sampled}}
		\put(64,129){{\tiny subchain}}
		\put(64,123){{\tiny tensor}}
		
		\put(64,108){{\tiny Rescale}}
		\put(64,101){{\tiny frontal slices}}
				
		\put(86,56){\footnotesize  $\tilde{\Ge}^{\neq n}$}
	\end{picture}
\end{figure} 

\begin{algorithm}
	\caption{Computation of $\tilde{\Gbf}_{[2]}^{\neq n}$ (proposal)}
	\label{alg:compute-sketch-G}
	\DontPrintSemicolon 
	\SetAlgoNoLine
	\KwIn{Samples $\vbf$, distributions $\{\pbf^{(k)}\}_{k \neq n}$}
	\KwOut{Sketch $\tilde{\Gbf}_{[2]}^{\neq n} = \Sbf \Gbf_{[2]}^{\neq n}$}
	Initialize $\tilde{\Ge}^{\neq n} = \zerobf \in \Rb^{R_n \times J \times R_{n-1}}$\;
	\For{$j \in [J]$}{
		$i = \vbf(j)$ \tcp*{Where $i = \overline{i_{n+1} \cdots i_N i_1 \cdots i_{n-1}}$} 
		$c = \sqrt{J \prod_{k \neq n} \pbf(i_k)}$\;
		$\tilde{\Ge}^{\neq n}(:,j,:) = \Ge^{(n+1)}(:,i_{n+1},:) \cdots \Ge^{(N)}(:,i_N,:)$\;
		$\;\;\;\;\;\;\;\;\;\;\;\;\;\;\;\;\;\;\;\;\cdot \Ge^{(1)}(:, i_1, :) \cdots \Ge^{(n-1)}(:, i_{n-1}, :) / c$\;
	}
	\Return{$\tilde{\Gbf}_{[2]}^{\neq n}$}\;
\end{algorithm}

The sampling of the data tensor on line~\ref{line:tr-als-sampled:sample-X} in Algorithm~\ref{alg:TR-ALS-Sampled} only needs to access $J I_n$ entries of $\Xe$, or less if $\Xe$ is sparse.
This is why our proposed algorithm has a cost which is sublinear in the number of entries in $\Xe$.
Due to Theorem~\ref{thm:main}, each sampled least squares solve on line~\ref{line:tr-als-sampled:ls} has high-probability relative-error guarantees, provided $J$ is large enough.

\subsection{Termination Criteria} \label{sec:termination-criteria}

Both Algorithms~\ref{alg:TR-ALS} and \ref{alg:TR-ALS-Sampled} rely on using some termination criterion.
One such criterion is to terminate the outer loop when the relative error $\|\TR((\Ge^{(n)})_{n=1}^N) - \Xe\|_\F/\|\Xe\|_\F$, or its change, is below some threshold.
Computing the relative error exactly costs $O(I^N R^2)$ which may be too costly for large tensors. 
In particular, it would defeat the purpose of the sublinear complexity of our method.
An alternative is to terminate when the change in $\|\TR((\Ge^{(n)})_{n=1}^N)\|_\F$ is below some threshold.
Such an approach is used for large scale Tucker decomposition by \citet{kolda2008} and \citet{malik2018}.
The norm $\|\TR((\Ge^{(n)})_{n=1}^N)\|_\F$ can be computed efficiently using an algorithm by \citet{mickelin2020}.
When $I_n = I$ and $R_n = R$ for all $n \in [N]$ it has complexity $O(N I R^4)$; see Section~3.3 of \citet{mickelin2020}.
Another approach is to estimate the relative error via sampling, which is used by \citet{battaglino2018} for their randomized CP decomposition method.

\subsection{Adaptive Sample Size} \label{sec:adaptive-sample-size}

Algorithm~\ref{alg:TR-ALS-Sampled} can produce good results even if the sample size $J$ is smaller than what Theorem~\ref{thm:main} suggests.
Choosing an appropriate $J$ for a particular problem can therefore be challenging.
\citet{aggour2020} develop an ALS-based algorithm for CP decomposition with an adaptive sketching rate to speed up convergence.
Similar ideas can easily be incorporated in our Algorithm~\ref{alg:TR-ALS-Sampled}.
Indeed, one of the benefits of our method is that the sample size $J$ can be changed at any point with no overhead cost.
By contrast, for rTR-ALS, another randomized method for TR decomposition which we describe in Section~\ref{sec:experiments}, the entire compression phase would need to be redone if the sketch rate is updated.

\subsection{Complexity Analysis} \label{sec:complexity-analysis}

\begin{table}[h] 
	\centering
	\caption{
		Comparison of leading order computational complexity.
		``$\noiter$'' denotes the number of outer loop iterations in the ALS-based methods.
	} 
	\label{table:complexity}
	\begin{tabular}{ll}
		\toprule
		Method & Complexity \\
		\midrule
		TR-ALS  				& $NIR^2 + \noiter \cdot N I^N R^2$ \\
		rTR-ALS					& $N I^N K + \noiter \cdot N K^N R^2$ \\
		TR-SVD   				& $I^{N+1} + I^N R^3$\\
		TR-SVD-Rand   			& $I^N R^2$ \\
		TR-ALS-S.\ (proposal) 	& $N I R^4 + \noiter \cdot \frac{N I R^{2(N+1)}}{\varepsilon \delta}$ \\
		\bottomrule
	\end{tabular}
\end{table}

Table~\ref{table:complexity} compares the leading order complexity of our proposed method to the four other methods we consider in the experiments.
The other methods are described in Section~\ref{sec:experiments}.
For simplicity, we assume that $I_n = I$ and $R_n = R$ for all $n \in [N]$, and that $N < I$, $R^2 < I$ and $NR < I$.
For rTR-ALS, the intermediate $N$-way Tucker core is assumed to be of size $K \times \cdots \times K$; see \citet{yuan2019} for details.
The complexity for our method in the table assumes $J$ satisfies \eqref{eq:J-bound-2}, and that $\varepsilon$ and $\delta$ are small enough so that \eqref{eq:J-bound-2} simplifies to $J > 4 R^{2N} / (\varepsilon \delta)$.
Any costs for checking convergence criteria are ignored.
This is justified since e.g.\ the termination criterion based on computing $\|\TR((\Ge^{(n)})_{n=1}^N)\|_\F$ described in Section~\ref{sec:termination-criteria} would not impact the complexity of TR-ALS or our TR-ALS-Sampled, and would only impact that of rTR-ALS if $K^N < IR^2$.
The complexities in Table~\ref{table:complexity} are derived in Section~\ref{SUPP:sec:detailed-complexity} of the supplement.

The complexity of our method is sublinear in the number of tensor elements, avoiding the dependence on $I^N$ that the other methods have.
While there is still an exponential dependence on $N$, this is still much better than $I^N$ for low-rank decomposition of large tensors in which case $R \ll I$.
This is particularly true when the input tensor is too large to store in RAM and accessing its elements dominate the cost of the decomposition.
We note that our method typically works well in practice even if \eqref{eq:J-bound-2} is not satisfied.

\section{Experiments} \label{sec:experiments}

\subsection{Decomposition of Real and Synthetic Datasets} \label{sec:experiments-decomposition}

We compare our proposed TR-ALS-Sampled method (Alg.~\ref{alg:TR-ALS-Sampled}) to four other methods: TR-ALS (Alg.~\ref{alg:TR-ALS}),  rTR-ALS (Alg.~1 in \citet{yuan2019}), TR-SVD (Alg.~1 in \citet{mickelin2020}) and a randomized variant of TR-SVD (Alg.~7 in \citet{ahmadi-asl2020}) which we refer to as TR-SVD-Rand.
TR-SVD takes an error tolerance as input and makes the ranks large enough to achieve this tolerance.
To facilitate comparison to the ALS-based methods, we modify TR-SVD so that it takes target ranks as inputs instead.
TR-SVD-Rand takes target ranks as inputs by default. 
For TR-ALS, rTR-ALS and TR-SVD-Rand, we wrote our own implementations since codes were not publicly available. 
For TR-SVD, we modified the function \verb|TRdecomp.m| provided by \citet{mickelin2020}\footnote{Available at \url{https://github.com/oscarmickelin/tensor-ring-decomposition}.}.
As suggested by \citet{ahmadi-asl2020}, we use an oversampling parameter of 10 in TR-SVD-Rand.
The relative error of a decomposition is computed as $\|\TR((\Ge^{(n)})_{n=1}^N) - \Xe\|_\F/\|\Xe\|_\F$.

To get a fair comparison between the different methods we first decompose the input tensor with TR-ALS by running it until a convergence criterion has been met or a maximum number of iterations reached.
All ALS-based algorithms (including TR-ALS) are then applied to the input tensor, running for twice as many iterations as it took the initial TR-ALS run to terminate, without checking any convergence criteria.
This is done to strip away any run time influence that checking convergence has and only compare those aspects that differ between the methods.
TR-ALS-Sampled and rTR-ALS apply sketching in fundamentally different ways.
To best understand the trade-off between run time and accuracy, we start out these two algorithm with small sketch rates ($J$ and $K$, respectively) and increase them until the resulting relative error is less than $(1+\varepsilon)E$, where $E$ is the relative error achieved by TR-ALS during the second run and $\varepsilon \in (0,1)$.
The relative error and run time for the smallest sketch rate that achieve this relative-error bound are then reported.
The exact details on how this is done for each individual experiment is described in Section~\ref{SUPP:sec:additional-experiment-details} of the supplement.

All experiments are run in Matlab R2017a with Tensor Toolbox 2.6 \citep{bader2015} on a computer with an Intel Xeon E5-2630 v3 @ 32x 3.2GHz CPU and 32 GB of RAM.
All our code used in the experiments is available online\footnote{Available at \url{https://github.com/OsmanMalik/tr-als-sampled}.}.

\subsubsection{Randomly Generated Data} \label{sec:experiment-random-data}

Synthetic 3-way tensors are generated by creating 3 cores of size $R' \times I \times R'$ with entries drawn independently from a standard normal distribution.
Here, $R'$ is used to denote true rank, as opposed to the rank $R$ used in the decomposition.
For each core, one entry is chosen uniformly at random and set to 20.
The goal of this is to increase the coherence in the least squares problems, which makes them more challenging for sampling-based methods like ours.
The cores are then combined into a full tensor via \eqref{eq:tr-decomposition-elementwise}.
Finally, Gaussian noise with standard deviation 0.1 is added to all entries in the tensor independently.

\begin{figure}[h!]
	\centering  
	\includegraphics[width=1\columnwidth]{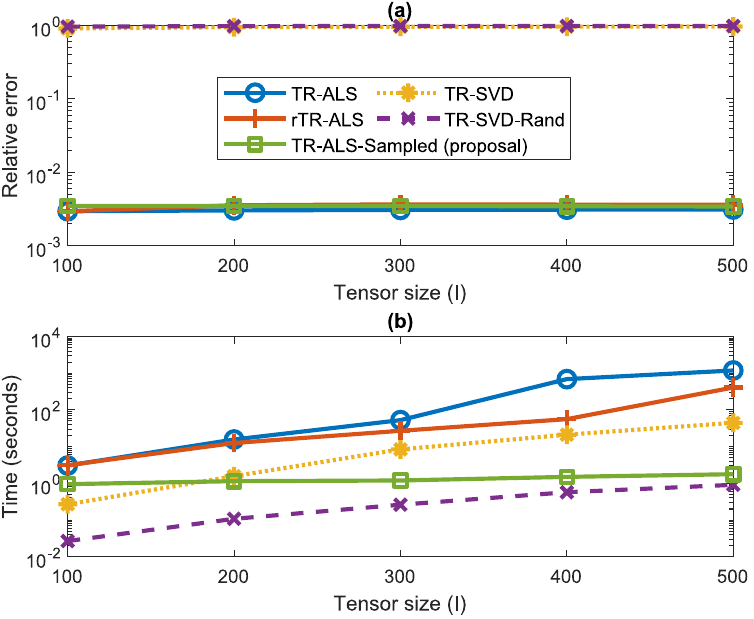}
	\caption{
		Synthetic experiment with true and target ranks $R'=R=10$. 
		(a) Relative error and (b) run time for each of the five methods.
		Average number of ALS iterations used is 21.
	}
	\label{fig:experiment-1a}
\end{figure}

In the first synthetic experiment $R' = 10$ and $I \in \{100, 200, \ldots, 500\}$.
The target rank is also set to $R = 10$. 
Ideally, decompositions should therefore have low error.
Figure~\ref{fig:experiment-1a} shows the relative error and run time for the five different algorithms.
All plotted quantities are the median over 10 runs.
The ALS-based algorithms reach very low errors. 
On average, only 21 iterations are needed, and the number of iterations is fairly stable when the tensor size is increased.
The SVD-based algorithms, by contrast do very poorly, having relative errors close to 1.
Our proposed TR-ALS-Sampled is the fastest method, aside from TR-SVD-Rand which returns very poor results. 
We achieve a substantial speedup of $668\times$ over TR-ALS and $230\times$ over rTR-ALS for $I=500$.

\begin{figure}[h!]
	\centering  
	\includegraphics[width=1\columnwidth]{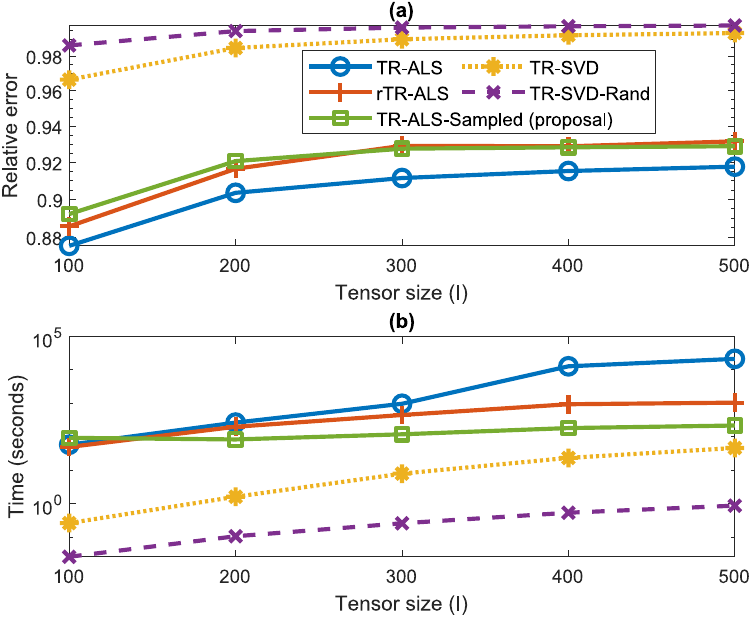}
	\caption{
		Synthetic experiment with true rank $R' = 20$ and target rank $R=10$.
		(a) Relative error and (b) run time for each of the five methods.
		Average number of ALS iterations used is 417.
	}
	\label{fig:experiment-1b}
\end{figure}

In the second synthetic experiment, we set $R' = 20$ but keep the target rank at $R = 10$.
We therefore expect the decompositions to have a larger relative error.
Figure~\ref{fig:experiment-1b} shows the results for these experiments.
All plotted quantities are the median over 10 runs.
The ALS-based methods still outperform the SVD-based ones.
The ALS-based methods now require more iterations, on average running for 417 iterations, with greater variability between runs.
Our proposed TR-ALS-Sampled remains faster than the other ALS-based methods, although with a smaller difference, achieving a speedup of $98\times$ over TR-ALS and $5\times$ over rTR-ALS for $I=500$.
Although the SVD-based methods are faster, especially the randomized variant, they are not useful since they give such a high error.
See Remarks~\ref{SUPP:remark:SVD-performance} and \ref{SUPP:remark:empirical-vs-theoretical-complexity} in the supplement for further discussion of the results in Figures~\ref{fig:experiment-1a} and \ref{fig:experiment-1b}. 

The signal-to-noise ratio (SNR) is about 50 dB and 59 dB, respectively, in the two experiments above.
To validate the robustness of those results to higher levels of noise we run additional experiments on synthetic $300 \times 300 \times 300$ tensors with added Gaussian noise with standard deviation 1 and 10 which corresponds to an SNR of 30 dB and 10 dB, respectively.
All other settings are kept the same as in the first synthetic experiment.
The results, shown in Figure~\ref{fig:higher-snr}, indicate that the earlier experiment results are robust to higher noise levels.
\begin{figure}[ht!]
	\centering  
	\includegraphics[width=1\columnwidth]{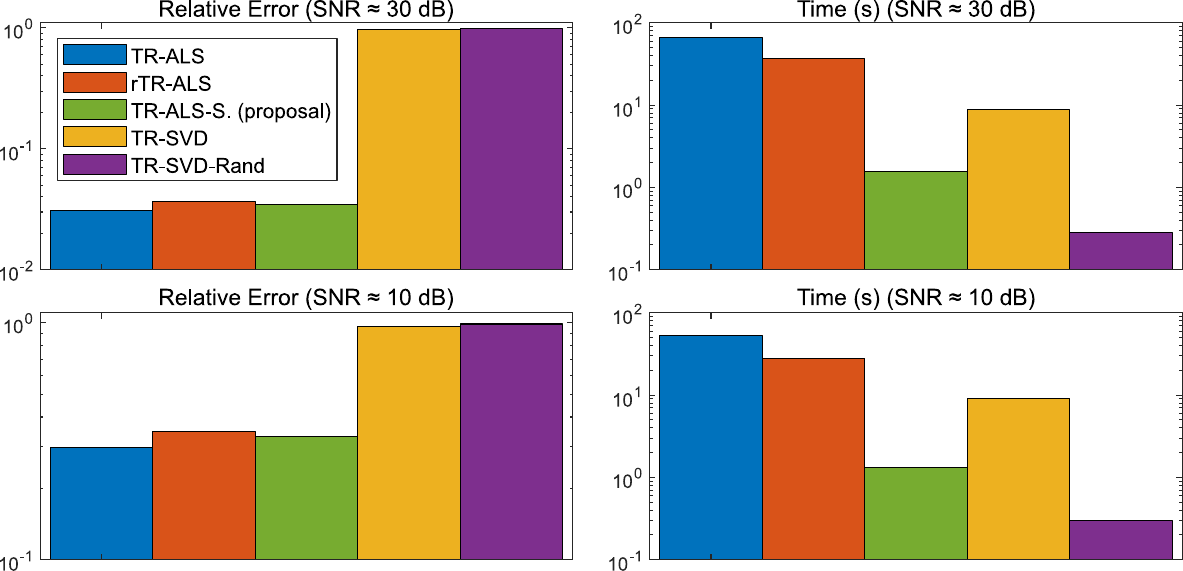}
	\caption{Comparison of methods for higher levels of noise.}
	\label{fig:higher-snr}
\end{figure}

\subsubsection{Highly Oscillatory Functions}

\sisetup{round-mode=places, table-number-alignment=right, table-text-alignment=right}
\begin{table*}[t!]
	\centering
	\caption{Decomposition results for highly oscillatory functions with target rank $R=10$.} 
	\label{tab:rank-10-results-osc-func}
	\begin{tabular}{
			l
			S[round-precision=3, table-figures-decimal=3, table-figures-integer=1]
			S[round-precision=1, table-figures-decimal=1, table-figures-integer=3]
			S[round-precision=3, table-figures-decimal=3, table-figures-integer=1]
			S[round-precision=1, table-figures-decimal=1, table-figures-integer=3]
			S[round-precision=3, table-figures-decimal=3, table-figures-integer=1]
			S[round-precision=1, table-figures-decimal=1, table-figures-integer=3]
		}  
		\toprule
		& \multicolumn{2}{c}{Linear Growth} & \multicolumn{2}{c}{Airy} & \multicolumn{2}{c}{Chirp} \\
		\cmidrule(lr){2-3}
		\cmidrule(lr){4-5}
		\cmidrule(l){6-7}
		Method 				 		& {Error} & {Time (s)} & {Error} & {Time (s)}  & {Error} & {Time (s)}\\
		\midrule
		TR-ALS 				 		& 0.0102  & 370.3  & 0.0203  & 531.7   & 0.0198  & 613.1  \\
		rTR-ALS 			 		& 0.0105  & 382.8  & 0.0186  & 521.6   & 0.0193  & 645.9  \\
		TR-ALS-Sampled (proposal) 	& 0.0109  &   3.4  & 0.0213  &   2.7   & 0.0210  &  36.2  \\
		\bottomrule
	\end{tabular}
\end{table*}

Next, we decompose the highly oscillatory functions considered by \citet{zhao2016}. 
The functions are illustrate in Figure~\ref{fig:oscillatory-func}.
Each function is evaluated at $4^{10}$ points.
These values are then reshaped into 10-way tensors of size $4 \times \cdots \times 4$ which are then decomposed.
Table~\ref{tab:rank-10-results-osc-func} shows the results for when all target ranks are $R=10$.
The SVD-based methods can not be applied in this experiment since they require $R_0 R_1 \leq I_1$ which is not satisfied.

\begin{figure}[h]
	\centering  
	\includegraphics[width=1\columnwidth]{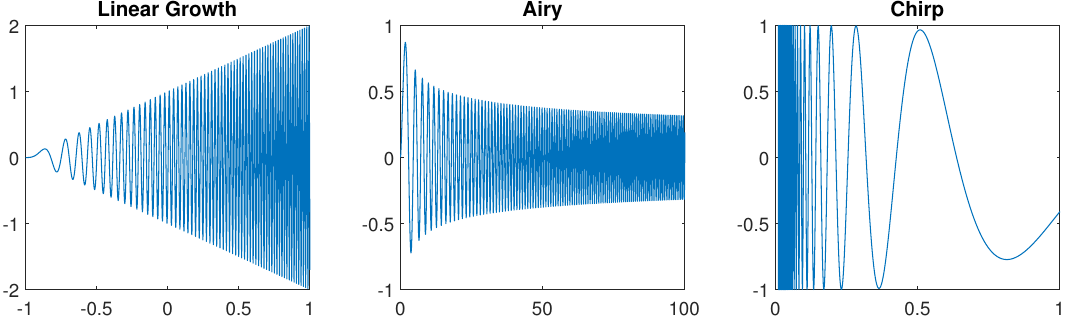}
	\caption{
		The functions are defined as follows. 
		Linear growth: $(x+1) \sin(100(x+1)^2)$.
		Airy: $x^{-1/4} \sin(2 x^{3/2} / 3)$.
		Chirp: $\sin(4/x) \cos(x^2)$.
	}
	\label{fig:oscillatory-func}
\end{figure}

All algorithms achieve a relative error of only 1\%--2\%. 
This is impressive, especially considering that the choice of $R_n = 10$ correspond to a $262\times$ compression rate. 
With an error very similar to that of TR-ALS, our proposal TR-ALS-Sampled achieves speedups of $108 \times$, $193 \times$ and $17 \times$ on the three tensors over TR-ALS.
Our method achieves speedups of $112\times$, $190\times$ and $18\times$ over rTR-ALS.
Notice that rTR-ALS yields no speedup over TR-ALS in Table~\ref{tab:rank-10-results-osc-func}.
The reason for this is that each $I_n = 4$ is so small.
For rTR-ALS to have acceptable accuracy, $K$ must be chosen to be $K = 4$, meaning that rTR-ALS is just as slow as TR-ALS.

\subsubsection{Image and Video Data}

We consider five real image and video datasets\footnote{Links to the datasets are provided in Section~\ref{SUPP:sec:links-to-datasets} of the supplement.} (sizes in parentheses): 
Pavia University ($610 \times 340 \times 103$) and DC Mall ($1280 \times 307 \times 191$) are 3-way tensors containing hyperspectral images.
The first two dimensions are the image height and width, and the third dimension is the number of spectral bands.
Park Bench ($1080 \times 1920 \times 364$) and Tabby Cat ($720 \times 1280 \times 286$) are 3-way tensors representing grayscale videos of a man sitting on a park bench and a cat, respectively.
The first two dimension are frame height and width, and the third dimension is the number of frames.
Red Truck ($128 \times 128 \times 3 \times 72$) is a 4-way tensor consisting of 72 color images of size 128 by 128 pixels depicting a red truck from different angles.
It is a subset of the COIL-100 dataset \citep{nene1996}.

Tables~\ref{tab:rank-10-results} and \ref{tab:rank-20-results} show results for when all target ranks are $R = 10$ and $R = 20$, respectively.
The SVD-based algorithms require $R_0 R_1 \leq I_1$, so they cannot handle the Red Truck tensor when $R=20$ since $I_1 = 128$ for that dataset. 
All results are based on a single trial.
Our proposed TR-ALS-Sampled runs faster than the other ALS-based methods on all image and video datasets, achieving up to $1518\times$ speedup over TR-ALS (for the Park Bench tensor with $R=10$) and a $21\times$ speedup over rTR-ALS (for the Red Truck tensor with $R=20$).
The SVD-based methods perform better than they did on the randomly generated data, but remain worse than the ALS-based methods.
TR-SVD-Rand is always the fastest method, but also has a substantially higher error.

\sisetup{round-mode=places, table-number-alignment=right, table-text-alignment=right}

\begin{table*}[t!]
	\centering
	\caption{Decomposition results for real datasets with target rank $R=10$.
	Time is in seconds.} 
	\label{tab:rank-10-results}
	\begin{tabular}{
			l
			S[round-precision=2, table-figures-decimal=2, table-figures-integer=1]
			S[round-precision=1, table-figures-decimal=1, table-figures-integer=2]
			S[round-precision=2, table-figures-decimal=2, table-figures-integer=1]
			S[round-precision=1, table-figures-decimal=1, table-figures-integer=3]
			S[round-precision=2, table-figures-decimal=2, table-figures-integer=1]
			S[round-precision=1, table-figures-decimal=1, table-figures-integer=4]
			S[round-precision=2, table-figures-decimal=2, table-figures-integer=1]
			S[round-precision=1, table-figures-decimal=1, table-figures-integer=4]
			S[round-precision=2, table-figures-decimal=2, table-figures-integer=1]
			S[round-precision=1, table-figures-decimal=1, table-figures-integer=3]
		}  
		\toprule
		& \multicolumn{2}{c}{Pavia Uni.} & \multicolumn{2}{c}{DC Mall} & \multicolumn{2}{c}{Park Bench} & \multicolumn{2}{c}{Tabby Cat} & \multicolumn{2}{c}{Red Truck}\\
		\cmidrule(lr){2-3}
		\cmidrule(lr){4-5}
		\cmidrule(lr){6-7}
		\cmidrule(lr){8-9}
		\cmidrule(l){10-11}
		Method 				 & {Error} & {Time} & {Error} & {Time}  & {Error} & {Time}  & {Error} & {Time}  & {Error} & {Time}  \\
		\midrule
		TR-ALS 				 &  0.1639 &  62.075 & 0.15981 &   801.4 & 0.073525 & 5384.9 & 0.13579 & 1769.4 &  0.1765 &   181.21 \\
		rTR-ALS 			 & 0.17566 &  16.251 & 0.17346 &   25.37 &  0.08053 & 64.814 & 0.14447 & 8.9931 & 0.19307 &   21.984 \\
		TR-ALS-S. (proposal) & 0.17755 &   1.727 &  0.1754 &  3.0827 & 0.080136 & 3.5475 & 0.14786 & 2.4187 & 0.19077 &   1.9332 \\
		\midrule
		TR-SVD 				 &  0.2121 &   2.138 & 0.20415 &  12.773 &  0.07814 & 136.45 & 0.15359 &  37.73 & 0.26219 &   1.2844 \\
		TR-SVD-Rand 		 & 0.32651 & 0.16243 & 0.29738 & 0.33525 &  0.11089 & 2.7777 & 0.19614 & 1.5828 & 0.29276 & 0.085354 \\
		\bottomrule
	\end{tabular}
\end{table*}

\begin{table*}[t!]
	\centering
	\caption{
		Decomposition results for real datasets with target rank $R=20$.
		The \xmark\ signifies that the SVD-based methods cannot handle this case since they require $R_0 R_1 \leq I_1$.
		Time is in seconds.
	} 
	\label{tab:rank-20-results}
	\begin{tabular}{
			l
			S[round-precision=2, table-figures-decimal=2, table-figures-integer=1]
			S[round-precision=1, table-figures-decimal=1, table-figures-integer=3]
			S[round-precision=2, table-figures-decimal=2, table-figures-integer=1]
			S[round-precision=1, table-figures-decimal=1, table-figures-integer=3]
			S[round-precision=2, table-figures-decimal=2, table-figures-integer=1]
			S[round-precision=1, table-figures-decimal=1, table-figures-integer=4]
			S[round-precision=2, table-figures-decimal=2, table-figures-integer=1]
			S[round-precision=1, table-figures-decimal=1, table-figures-integer=4]
			S[round-precision=2, table-figures-decimal=2, table-figures-integer=1]
			S[round-precision=1, table-figures-decimal=1, table-figures-integer=4]
		}
		\toprule
		& \multicolumn{2}{c}{Pavia Uni.} & \multicolumn{2}{c}{DC Mall} & \multicolumn{2}{c}{Park Bench} & \multicolumn{2}{c}{Tabby Cat} & \multicolumn{2}{c}{Red Truck}\\
		\cmidrule(lr){2-3}
		\cmidrule(lr){4-5}
		\cmidrule(lr){6-7}
		\cmidrule(lr){8-9}
		\cmidrule(l){10-11}
		Method 				 & {Error} & {Time} & {Error} & {Time}  & {Error} & {Time}  & {Error} & {Time}  & {Error} & {Time}  \\
		\midrule
		TR-ALS 				 & 0.061195 &  249.91 & 0.057176 & 622.81 & 0.041872 & 2385.0 & 0.10838 & 1397.2 & 0.11019 & 1279.1 \\
		rTR-ALS 			 & 0.066827 &  194.61 & 0.062014 & 349.63 & 0.045118 & 531.46 & 0.11564 & 92.591 & 0.11864 & 470.59 \\
		TR-ALS-S. (proposal) &  0.06652 &  13.738 & 0.062687 & 26.535 & 0.045891 & 24.952 & 0.11909 &   19.0 & 0.11958 & 22.607 \\
		\midrule
		TR-SVD 				 & 0.098693 &   4.587 &  0.10265 & 25.401 & 0.048204 & 271.99 & 0.13057 &  110.0 &  \xmark & \xmark \\
		TR-SVD-Rand 		 &  0.28156 & 0.50099 &  0.25283 & 1.0098 & 0.098901 & 8.5071 & 0.16821 & 4.0938 &  \xmark & \xmark \\
		\bottomrule
	\end{tabular}
\end{table*}

A popular preprocessing step used in tensor decomposition is to first reshape a tensor into a tensor with more modes.
Next, we therefore try reshaping the five tensors above so that they each have twice as many modes.
Some of the dimensions are truncated slightly to allow for this reshaping.
Our method yields speedups in the range $94\times$--$2880\times$ over TR-ALS and $73\times$--$1484\times$ over rTR-ALS.
The SVD-based methods cannot be applied since they require $R_0 R_1 \leq I_1$ which is not satisfied for these reshaped tensors.
The speed benefit of our method appears to be \emph{even greater} when increasing the number of modes.
Please see Section~\ref{SUPP:sec:additional-details-image-and-video} of the supplement for further details on this experiment, including detailed results.

An alternative to sampling according to the distribution defined by the leverage scores in \eqref{eq:p-definition} and \eqref{eq:q-definition} is to simply sample according to the uniform distribution, i.e.,  $\qbf^{\neq n} = 1/\prod_{j \neq n} I_j$.
Although such a sampling approach does not come with any guarantees, it is faster than leverage score sampling since it avoids the cost of computing the sampling distribution.
In order to investigate the performance of uniform sampling, we repeat the experiments whose results are show in Tables~\ref{tab:rank-10-results} and \ref{tab:rank-20-results} for TR-ALS-Sampled, but using uniform sampling instead of leverage score sampling.
Everything else is kept the same.
Table~\ref{tab:uniform-sampling} shows the results.

\begin{table}[ht!]
	\centering
	\caption{
		Decomposition results for TR-ALS-Sampled with \emph{uniform sampling}.
	} 
	\label{tab:uniform-sampling}
	\begin{tabular}{lrrrr}  
		\toprule
		& \multicolumn{2}{c}{$R=10$} & \multicolumn{2}{c}{$R=20$} \\
		\cmidrule(lr){2-3} 	
		\cmidrule(l){4-5}
		Dataset 	& Error & Time (s) & Error & Time (s) \\
		\midrule
		Pavia Uni. 	& 0.18 & 1.3 & 0.07 & 9.0  \\
		DC Mall 	& 0.18 & 3.7 & 0.06 & 22.2 \\
		Park Bench 	& 0.08 & 5.8 & 0.05 & 28.0 \\
		Tabby Cat 	& 0.15 & 2.2 & 0.12 & 23.1 \\
		Red Truck 	& 0.20 & 1.5 & 0.12 & 18.7 \\
		\bottomrule
	\end{tabular}
\end{table}

Using uniform instead of leverage score sampling does not appear to impact the error and overall run time much.
However, uniform sampling requires 4.6--8.8 times as many samples to reach the target accuracy.
Although more samples increase the cost of the least squares solve, the cost of updating the sampling distribution is avoided.
It may be possible to combine uniform and leverage score sampling to speed up our method while still retaining some guarantees.
Uniform sampling has also yielded promising empirical results for the CP decomposition; see e.g.\ \citet{battaglino2018} and \citet{aggour2020}.

\subsection{Rapid Feature Extraction for Classification} \label{sec:experiments-feature-extraction}

Inspired by experiments in \citet{zhao2016}, we now give a practical example of how our method can be used for rapid feature extraction for classification.
We consider the full COIL-100 dataset which consists of 7200 color images of size $128 \times 128$ depicting 100 different objects from 72 different angles each.
The images are downsampled to $32 \times 32$ and stacked into a tensor $\Xe$ of size $32 \times 32 \times 3 \times 7200$.
We then use the various TR decomposition algorithms to compute a TR decomposition of $\Xe$ with each rank $R_n = 5$.
Additional details on algorithm settings are given in Section~\ref{SUPP:sec:additional-details-feature-extraction} of the supplement.
The TR core $\Ge^{(4)}$ is of size $5 \times 7200 \times 5$ and contains latent features for the 4th mode of $\Xe$.
We reshape it into a $7200 \times 25$ feature matrix and apply the $k$-nearest neighbor algorithm with $k = 1$.
Table~\ref{tab:feature-extraction} reports the decomposition error as well as the accuracy achieved when classifying the 7200 images into the 100 different classes.
The classification is done using 10-fold cross validation.
The ALS-based methods yield a lower error than the SVD-based methods.
All methods result in a similar classification accuracy. 
The higher decomposition errors for the SVD-based methods do not seem to compromise classification accuracy.

\begin{table}[ht!]
	\centering
	\caption{Decomposition error and classification accuracy for rapid feature extraction experiment.} 
	\label{tab:feature-extraction}
	\begin{tabular}{lrrr}  
		\toprule
		Method 						& Time (s) 	& Error	& Acc.\ (\%)	\\
		\midrule
		TR-ALS 						& 248.7		& 0.27	& 99.07 	 	\\
		rTR-ALS 					& 3.1		& 0.29	& 98.96 	 	\\
		TR-ALS-S.\ (proposal) 		& 12.7		& 0.28	& 99.32 	 	\\
		TR-SVD 						& 7.3   	& 0.37	& 99.68 	 	\\
		TR-SVD-Rand					& 0.7		& 0.33	& 99.85 	 	\\
		\bottomrule
	\end{tabular}
\end{table}

\section{Conclusion} \label{sec:conclusion}

We have proposed a method for TR decomposition which uses leverage score sampled ALS and has complexity sublinear in the number of input tensor entries.
We also proved high-probability relative-error guarantees for the sampled least squares problems.
Our method achieved substantial speedup over competing methods in experiments on both synthetic and real data, in some cases by as much as two or three orders of magnitude, while maintaining good accuracy.
We also demonstrated how our method can be used for rapid feature extraction.

The datasets in our experiments are dense.
Based on limited testing, rTR-ALS seems to do particularly well on sparse datasets, outperforming our proposed method. 
Our method still yielded a substantial speedup over TR-ALS.
Further investigating this, and optimizing the various methods for sparse tensors, is an interesting direction for future research.

\section*{Acknowledgments}

We would like to thank the reviewers for their many helpful comments and suggestions which helped improve this paper.
The work of OAM was supported by the AFOSR grant FA9550-20-1-0138.

\bibliography{library}

\onecolumn

\title{Supplementary Material}
\date{}

\maketitle
\thispagestyle{empty}

\section{Missing Proofs} \label{SUPP:sec:proofs}

In this section we give proofs for Lemma~\ref{lemma:leverage-score-sampling} and Theorem~\ref{thm:main} in the main manuscript.
We first state some results that we will use in these proofs.

Lemma~\ref{SUPP:lemma:structural-conds} is a variant of Lemma~1 by \citet{drineas2011} but for multiple right hand sides.
The proof by \citet{drineas2011} remains essentially identical with only minor modifications to account for the multiple right hand sides.
\begin{lemma} \label{SUPP:lemma:structural-conds}
	Let $\OPT \defeq \min_{\Xbf} \| \Abf \Xbf - \Ybf\|_\F$ be a least squares problem where $\Abf \in \Rb^{I \times R}$ and $I > R$, and let $\Ubf \in \Rb^{I \times \rank(\Abf)}$ contain the left singular vectors of $\Abf$. 
	Moreover, let $\Ubf^\perp$ be an orthogonal matrix whose columns span the space perpendicular to $\range(\Ubf)$ and define $\Ybf^\perp \defeq \Ubf^\perp (\Ubf^{\perp})^\top \Ybf$.
	Let $\Sbf \in \Rb^{J \times I}$ be a matrix satisfying
	\begin{equation} \label{SUPP:eq:cond-1}
		\sigma^2_{\min} (\Sbf \Ubf) \geq \frac{1}{\sqrt{2}},
	\end{equation}
	\begin{equation} \label{SUPP:eq:cond-2}
		\| \Ubf^\top \Sbf^\top \Sbf \Ybf^\perp\|_\F^2 \leq \frac{\varepsilon}{2} \OPT^2,
	\end{equation}
	for some $\varepsilon \in (0,1)$.
	Then, it follows that
	\begin{equation}
		\| \Abf \tilde{\Xbf} - \Ybf \|_\F \leq (1+\varepsilon)\OPT,
	\end{equation}
	where $\tilde{\Xbf} \defeq \argmin_{\Xbf} \| \Sbf \Abf \Xbf - \Sbf \Ybf \|_\F$.
\end{lemma}

Lemma~\ref{SUPP:lemma:singular-value-bound} is a slight restatement of Theorem~2.11 in the monograph by \citet{woodruff2014}.
The statement in that monograph has a constant $144$ instead of $8/3$.
However, we found that $8/3$ is sufficient under the assumption that $\varepsilon \in (0,1)$.
The proof given by \citet{woodruff2014} otherwise remains the same.  
\begin{lemma} \label{SUPP:lemma:singular-value-bound}
	Let $\Abf \in \Rb^{I \times R}$, $\varepsilon \in (0,1)$, $\eta \in (0,1)$ and $\beta \in (0,1]$.
	Suppose
	\begin{equation} \label{SUPP:eq:J-bound}
		J > \frac{8}{3} \frac{R \ln (2R/\eta)}{\beta \varepsilon^2}
	\end{equation} 
	and that $\Sbf \sim \Dc(J, \qbf)$ is a leverage score sampling matrix for $(\Abf, \beta)$ (see Definition~\ref{def:leverage-score-sampling-matrix} in the main manuscript).
	Then, with probability at least $1 - \eta$, the following holds:
	\begin{equation}
		(\forall i \in [\rank(\Abf)]) \;\;\;\; 1-\varepsilon \leq \sigma_i^2(\Sbf \Ubf) \leq 1+\varepsilon,
	\end{equation}
	where $\Ubf \in \Rb^{I \times \rank(\Abf)}$ contains the left singular vectors of $\Abf$.
\end{lemma}

Lemma~\ref{SUPP:lemma:approx-mm} is a part of Lemma~8 by \citet{drineas2006b}.
\begin{lemma} \label{SUPP:lemma:approx-mm}
	Let $\Abf$ and $\Bbf$ be matrices with $I$ rows, and let $\beta \in (0, 1]$. 
	Let $\qbf \in \Rb^I$ be a probability distribution satisfying
	\begin{equation} \label{SUPP:eq:approx-mm-lemma-cond}
		\qbf(i) \geq \beta \frac{\|\Abf(i, :)\|^2_2}{\| \Abf \|_\F^2} \;\;\;\; \text{for all } i \in [I].
	\end{equation}
	If $\Sbf \sim \Dc(J, \qbf)$, then
	\begin{equation}
		\Eb \|\Abf^\top \Bbf - \Abf^\top \Sbf^\top \Sbf \Bbf \|_\F^2 \leq \frac{1}{\beta J} \|\Abf\|_\F^2 \|\Bbf\|_\F^2.
	\end{equation}
\end{lemma}

In the following, $\otimes$ denotes the matrix Kronecker product; see e.g. Section~1.3.6 of \citet{golub2013} for details.
\begin{lemma} \label{SUPP:lemma:subchain-range}
	For $n \in [N]$, the subchain $\Ge^{\neq n}$ satisfies
	\begin{equation} \label{SUPP:eq:lemma-range}
	\range(\Gbf_{[2]}^{\neq n}) \subseteq \range(\Gbf_{(2)}^{(n-1)} \otimes \cdots \otimes \Gbf_{(2)}^{(1)} \otimes \Gbf_{(2)}^{(N)} \otimes \cdots \otimes \Gbf_{(2)}^{(n+1)}).
	\end{equation}
\end{lemma}
\begin{proof}
	We have
	\begin{equation}
	\begin{aligned}
		&\Gbf_{[2]}^{\neq n}(\overline{i_{n+1} \cdots i_{N} i_1 \cdots i_{n-1}}, \overline{r_{n-1} r_{n}}) = \\
		&\sum_{\substack{r_1, \ldots, r_{n-2}\\r_{n+1}, \ldots, r_N}} \Gbf_{(2)}^{(n+1)}(i_{n+1}, \overline{r_{n} r_{n+1}}) \cdots \Gbf_{(2)}^{(N)}(i_{N}, \overline{r_{N-1} r_{N}}) \Gbf_{(2)}^{(1)}(i_{1}, \overline{r_{N} r_{1}}) \cdots \Gbf_{(2)}^{(n-1)}(i_{n-1}, \overline{r_{n-2} r_{n-1}}).  
	\end{aligned}
	\end{equation}
	From this, it follows that
	\begin{equation}
	\begin{aligned}
		&\Gbf_{[2]}^{\neq n}(:, \overline{r_{n-1} r_{n}}) = \\
		&\sum_{\substack{r_1, \ldots, r_{n-2}\\r_{n+1}, \ldots, r_N}} \Gbf_{(2)}^{(n-1)}(:, \overline{r_{n-2} r_{n-1}}) \otimes \cdots \otimes  \Gbf_{(2)}^{(1)}(:, \overline{r_{N} r_{1}}) \otimes \Gbf_{(2)}^{(N)}(:, \overline{r_{N-1} r_{N}}) \otimes \cdots \otimes \Gbf_{(2)}^{(n+1)}(:, \overline{r_{n} r_{n+1}}).  
	\end{aligned}
	\end{equation}
	The right hand side of this equation is a sum of columns in
	\begin{equation} \label{SUPP:eq:kronecker-product}
		\Gbf_{(2)}^{(n-1)} \otimes \cdots \otimes \Gbf_{(2)}^{(1)} \otimes \Gbf_{(2)}^{(N)} \otimes \cdots \otimes \Gbf_{(2)}^{(n+1)}.
	\end{equation}
	Consequently, every column of $\Gbf_{[2]}^{\neq n}$ is in the range of the matrix in \eqref{SUPP:eq:kronecker-product}, and the claim in \eqref{SUPP:eq:lemma-range} follows.
\end{proof}

\begin{lemma} \label{SUPP:lemma:leverage-score-inequality}
	Let $\Abf$ and $\Bbf$ be two matrices with $I$ rows such that $\range(\Abf) \subseteq \range(\Bbf)$. 
	Then $\ell_i(\Abf) \leq \ell_i(\Bbf)$ for all $i \in [I]$.
\end{lemma}
\begin{proof}
	Let $\Qbf \in \Rb^{I \times \rank(\Bbf)}$ be an orthogonal matrix containing the left singular vectors of $\Bbf$. 
	Then there exists a matrix $\Mbf$ such that $\Abf = \Qbf \Mbf$. 
	Let $\Ubf \Sigmabf \Vbf^\top = \Mbf$ be the thin SVD of $\Mbf$ (i.e., such that $\Ubf$ and $\Vbf$ have only $\rank(\Mbf)$ columns and $\Sigmabf \in \Rb^{\rank(\Mbf) \times \rank(\Mbf)}$).
	Then $\Abf = \Qbf \Ubf \Sigmabf \Vbf^\top = \Wbf \Sigmabf \Vbf^\top$, where $\Wbf \defeq \Qbf \Ubf$. 
	Since
	\begin{equation}
	\Wbf^\top \Wbf = \Ubf^\top \Qbf^\top \Qbf \Ubf = \Ibf,
	\end{equation}
	$\Wbf$ is orthogonal, so $\Wbf \Sigmabf \Vbf^\top = \Abf$ is a thin SVD of $\Abf$.
	It follows that
	\begin{equation}
	\ell_i(\Abf) = \|\Wbf(i,:)\|_2^2 = \| \Qbf(i,:) \Ubf \|_2^2 \leq \| \Qbf(i,:) \|_2^2 \| \Ubf \|^2_2 = \| \Qbf(i,:) \|_2^2 = \ell_i(\Bbf).
	\end{equation}
\end{proof}

\begin{remark} \label{SUPP:remark:nonzero-cores}
	It is reasonable to assume that $\Ge^{\neq n} \neq \zerobf$ for all $n \in [N]$ during the execution of Algorithms~\ref{alg:TR-ALS} and \ref{alg:TR-ALS-Sampled}.
	If this was not the case, the least squares problem for the $n$th iteration of the inner for loop in these algorithms would have a zero system matrix which would make the least squares problem meaningless.
	The assumption $\Ge^{\neq n} \neq \zerobf$ for all $n \in [N]$ also implies that $\Ge^{(n)} \neq \zerobf$ for all $n \in [N]$.
	This means that $\rank(\Gbf_{(2)}^{(n)}) \geq 1$ and $\rank(\Gbf_{[2]}^{\neq n}) \geq 1$ for all $n \in [N]$, so that the various divisions with these quantities in this paper are well-defined.
\end{remark}

Lemma~\ref{SUPP:lemma:leverage-score-sampling} is a restatement of Lemma~\ref{lemma:leverage-score-sampling} in the main manuscript.
\begin{lemma} \label{SUPP:lemma:leverage-score-sampling}
	Let $\beta_n$ be defined as
	\begin{equation}
	\beta_n \defeq \Big( R_{n-1} R_n \prod_{\substack{j=1\\j \notin \{n-1,n\}}}^N R_j^2 \Big)^{-1}.
	\end{equation}
	For each $n \in [N]$, the vector $\qbf^{\neq n}$ is a probability distribution on $[\prod_{j \neq n} I_j]$, and $\Sbf \sim \Dc(J, \qbf^{\neq n})$ is a leverage score sampling matrix for $(\Gbf_{[2]}^{\neq n}, \beta_n)$.
\end{lemma}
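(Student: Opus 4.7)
The probability-distribution claim follows immediately by recalling the standard identity $\sum_i \ell_i(\Abf) = \rank(\Abf)$, so each $\pbf^{(j)}$ is a probability distribution on $[I_j]$, and $\qbf^{\neq n}$ is then the product measure over the $N-1$ disjoint coordinates $\{i_j : j \neq n\}$, hence a probability distribution on $[\prod_{j \neq n} I_j]$.

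For the leverage-score condition, the plan is to chain three ingredients. First, I would combine Lemma~\ref{lemma:subchain-range} with Lemma~\ref{lemma:leverage-score-inequality} to get $\ell_i(\Gbf_{[2]}^{\neq n}) \leq \ell_i(\Kbf)$ for every row index $i$, where $\Kbf$ denotes the Kronecker product $\Gbf_{(2)}^{(n-1)} \otimes \cdots \otimes \Gbf_{(2)}^{(1)} \otimes \Gbf_{(2)}^{(N)} \otimes \cdots \otimes \Gbf_{(2)}^{(n+1)}$. Second, I would use the fact that the Kronecker product of SVDs is an SVD of the Kronecker product, so the left singular vectors of $\Kbf$ are themselves a Kronecker product of the left singular vectors of the $\Gbf_{(2)}^{(j)}$. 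Coupled with the row-norm identity $\|\ubf \otimes \vbf\|_2^2 = \|\ubf\|_2^2 \|\vbf\|_2^2$, this yields the multiplicative factorization $\ell_i(\Kbf) = \prod_{j \neq n} \ell_{i_j}(\Gbf_{(2)}^{(j)})$, where $i$ decomposes into the coordinates $(i_j)_{j \neq n}$ in the natural Kronecker ordering. Third, I would substitute into the definition of $\qbf^{\neq n}$ to obtain
\[
    \qbf^{\neq n}(i) = \prod_{j \neq n} \frac{\ell_{i_j}(\Gbf_{(2)}^{(j)})}{\rank(\Gbf_{(2)}^{(j)})} \geq \frac{\ell_i(\Gbf_{[2]}^{\neq n})}{\prod_{j \neq n} \rank(\Gbf_{(2)}^{(j)})} \geq \frac{\ell_i(\Gbf_{[2]}^{\neq n})}{\prod_{j \neq n} R_{j-1} R_j},
\]
where the last step uses the trivial bound $\rank(\Gbf_{(2)}^{(j)}) \leq R_{j-1} R_j$ coming from the column count of the classical mode-2 unfolding.

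To finish, it remains to verify that $\beta_n \leq \bigl(\prod_{j \neq n} R_{j-1} R_j\bigr)^{-1}$. Unwinding this cyclic product with the convention $R_0 = R_N$, each interior rank $R_k$ with $k \notin \{n-1, n\}$ appears squared in the product, while the boundary rank $R_0 = R_N$ is either fully absorbed into the $R_{j-1}R_j$ pattern (when $n \in \{2, \ldots, N\}$) or appears one fewer time than in $\beta_n^{-1}$ (when $n = 1$). Case-by-case the inequality holds; combining with Remark~\ref{remark:nonzero-cores} (which ensures $\rank(\Gbf_{[2]}^{\neq n}) \geq 1$) gives
\[
    \qbf^{\neq n}(i) \geq \beta_n \cdot \ell_i(\Gbf_{[2]}^{\neq n}) \geq \beta_n \cdot \frac{\ell_i(\Gbf_{[2]}^{\neq n})}{\rank(\Gbf_{[2]}^{\neq n})},
\]
which is the leverage-score sampling condition in Definition~\ref{def:leverage-score-sampling-matrix}.

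The main obstacle I expect is the Kronecker-SVD step together with the correct bookkeeping of row indices across the classical and non-classical unfoldings in Definition~\ref{def:unfolding}; once the multiplicative factorization of leverage scores is established, the remaining rank arithmetic and verification of the $\beta_n$ bound reduce to straightforward term-by-term comparison in the cyclic product.
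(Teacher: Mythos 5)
Your proposal is correct and follows essentially the same route as the paper's own proof: the product-measure argument for the probability-distribution claim, the chain of Lemma~\ref{lemma:subchain-range} and Lemma~\ref{lemma:leverage-score-inequality}, the Kronecker-SVD factorization $\ell_i(\Kbf)=\prod_{j\neq n}\ell_{i_j}(\Gbf_{(2)}^{(j)})$, and the bound $\rank(\Gbf_{(2)}^{(j)})\leq R_{j-1}R_j$ are exactly the steps used there. The only cosmetic difference is that the paper treats $\prod_{j\neq n}R_{j-1}R_j$ as identically equal to $\beta_n^{-1}$ (under the cyclic convention $R_0=R_N$) rather than carrying an extra inequality, but your direction of comparison is right and the conclusion is unaffected.
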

\begin{proof}
	All $\qbf^{\neq n}(i)$ are clearly nonnegative.
	Moreover,
	\begin{equation}
		\sum_{\substack{i_1, \ldots, i_{n-1}\\i_{n+1}, \ldots, i_N}} \qbf^{\neq n}(\overline{i_{n+1} \cdots i_N i_1 \cdots i_{n-1}}) = \prod_{\substack{j=1\\j \neq n}}^N \sum_{i_j} \pbf^{(j)}(i_j) = \prod_{\substack{j=1\\j \neq n}}^N 1 = 1,
	\end{equation}
	since each $\pbf^{(j)}$ is a probability distribution. 
	So $\qbf^{\neq n}$ is clearly also a probability distribution.
	Next, define the vector $\pbf \in \Rb^{\prod_{j \neq n} I_n}$ via
	\begin{equation}
		\pbf(i) \defeq \frac{\ell_i(\Gbf_{[2]}^{\neq n})}{\rank(\Gbf_{[2]}^{\neq n})}.
	\end{equation}
	To show that $\Sbf$ is a leverage score sampling matrix for $(\Gbf_{[2]}^{\neq n}, \beta_n)$, we need to show that $\qbf^{\neq n} (i) \geq \beta_n \pbf(i)$ for all $i = \overline{i_{n+1} \cdots i_N i_1 \cdots i_{n-1}} \in [\prod_{j \neq n} I_n]$.
	To this end, combine Lemmas~\ref{SUPP:lemma:subchain-range} and \ref{SUPP:lemma:leverage-score-inequality} to get
	\begin{equation} \label{SUPP:eq:leverage-score-inequality}
		\ell_i(\Gbf_{[2]}^{\neq n}) \leq \ell_i(\Gbf_{(2)}^{(n-1)} \otimes \cdots \otimes \Gbf_{(2)}^{(1)} \otimes \Gbf_{(2)}^{(N)} \otimes \cdots \otimes \Gbf_{(2)}^{(n+1)}).
	\end{equation}
	For each $n \in [N]$, let $\Ubf^{(n)} \in \Rb^{I_n \times \rank(\Gbf_{(2)}^{(n)})}$ contain the left singular vectors of $\Gbf_{(2)}^{(n)}$. 
	It is well-known \citep{vanloan2000} that the matrix
	\begin{equation}
		\Ubf^{(n-1)} \otimes \cdots \otimes \Ubf^{(1)} \otimes \Ubf^{(N)} \otimes \cdots \otimes \Ubf^{(n+1)}
	\end{equation}
	contains the left singular vectors corresponding to nonzero singular values of
	\begin{equation}
		\Gbf_{(2)}^{(n-1)} \otimes \cdots \otimes \Gbf_{(2)}^{(1)} \otimes \Gbf_{(2)}^{(N)} \otimes \cdots \otimes \Gbf_{(2)}^{(n+1)}.
	\end{equation}
	Consequently, 
	\begin{equation} \label{SUPP:eq:leverage-score-kronecker}
	\begin{aligned}
		&\ell_i(\Gbf_{(2)}^{(n-1)} \otimes \cdots \otimes \Gbf_{(2)}^{(1)} \otimes \Gbf_{(2)}^{(N)} \otimes \cdots \otimes \Gbf_{(2)}^{(n+1)}) \\
		&= \big( (\Ubf^{(n-1)} \otimes \cdots \otimes \Ubf^{(1)} \otimes \Ubf^{(N)} \otimes \cdots \otimes \Ubf^{(n+1)}) (\Ubf^{(n-1)} \otimes \cdots \otimes \Ubf^{(1)} \otimes \Ubf^{(N)} \otimes \cdots \otimes \Ubf^{(n+1)})^\top \big)_{ii} \\
		&= \big((\Ubf^{(n-1)} \Ubf^{(n-1)\top}) \otimes \cdots \otimes (\Ubf^{(1)} \Ubf^{(1)\top}) \otimes (\Ubf^{(N)} \Ubf^{(N)\top}) \otimes \cdots \otimes (\Ubf^{(n+1)} \Ubf^{(n+1)\top})\big)_{ii} \\
		&= (\Ubf^{(n-1)} \Ubf^{(n-1)\top})_{i_{n-1} i_{n-1}} \cdots (\Ubf^{(1)} \Ubf^{(1)\top})_{i_1 i_1} (\Ubf^{(N)} \Ubf^{(N)\top})_{i_N i_N} \cdots (\Ubf^{(n+1)} \Ubf^{(n+1)\top})_{i_{n+1} i_{n+1}} \\
		&= \ell_{i_{n-1}}(\Gbf_{(2)}^{(n-1)}) \cdots \ell_{i_{1}}(\Gbf_{(2)}^{(1)}) \ell_{i_{N}}(\Gbf_{(2)}^{(N)}) \cdots \ell_{i_{n+1}}(\Gbf_{(2)}^{(n+1)}),
	\end{aligned}
	\end{equation}
	where the first and fourth equalities follow from the definition of leverage score and the fact that 
	\begin{equation}
		\|\Mbf(i,:)\|_2^2 = (\Mbf \Mbf^\top)(i,i)
	\end{equation}
	for any matrix $\Mbf$, and the second equality follows from well-known properties of the Kronecker product \citep{vanloan2000}.
	Combining \eqref{SUPP:eq:leverage-score-inequality} and \eqref{SUPP:eq:leverage-score-kronecker}, we have
	\begin{equation} \label{SUPP:eq:leverage-score-compact-ineq}
		\ell_i(\Gbf_{[2]}^{\neq n}) \leq \prod_{\substack{j=1\\j \neq n}}^N \ell_{i_j}(\Gbf_{(2)}^{(j)}).	
	\end{equation}
	We therefore have
	\begin{equation}
		\qbf^{\neq n}(i) = \frac{\prod_{j \neq n} \ell_{i_j} (\Gbf_{(2)}^{(j)})}{\prod_{j \neq n} \rank(\Gbf_{(2)}^{(j)})} \geq \frac{\ell_i(\Gbf_{[2]}^{\neq n})}{R_{n-1} R_n \prod_{j \in [N] \setminus \{n-1, n\}} R_j^2} = \beta_n \ell_i(\Gbf_{[2]}^{\neq n}) \geq \beta_n \pbf(i)
	\end{equation}
	as desired, where the first inequality follows from \eqref{SUPP:eq:leverage-score-compact-ineq} and the fact that $\rank(\Gbf_{(2)}^{(j)}) \leq R_{j-1} R_j$.
\end{proof}

The following is a restatement of Theorem~\ref{thm:main} in the main manuscript. 
The proof is similar to that of Theorem~2 by \citet{drineas2011}.
\begin{theorem} \label{SUPP:thm:main}
	Let $\Sbf \sim \Dc(J, \qbf^{\neq n})$, $\varepsilon \in (0,1)$, $\delta \in (0,1)$ and $\tilde{\Ze} \defeq \argmin_{\Ze} \| \Sbf \Gbf_{[2]}^{\neq n} \Zbf_{(2)}^\top - \Sbf \Xbf_{[n]}^\top \|_\F$.
	If 
	\begin{equation}
		J > \Big( \prod_{j=1}^N R_j^2 \Big) \max \Big( \frac{16}{3(\sqrt{2} - 1)^2} \ln\Big(\frac{4 R_{n-1} R_n}{\delta}\Big), \frac{4}{\varepsilon\delta} \Big),
	\end{equation}
	then the following holds with probability at least $1-\delta$:
	\begin{equation} \label{SUPP:eq:main-statement}
		\| \Gbf_{[2]}^{\neq n} \tilde{\Zbf}_{(2)}^\top - \Xbf_{[n]}^\top \|_\F \leq (1+\varepsilon) \min_{\Ze} \| \Gbf_{[2]}^{\neq n} \Zbf_{(2)}^\top - \Xbf_{[n]}^\top \|_\F.
	\end{equation}
\end{theorem}
\begin{proof}
	Let $\Ubf \in \Rb^{\prod_{j \neq n} I_j \times \rank(\Gbf_{[2]}^{\neq n})}$ contain the left singular vectors of $\Gbf_{[2]}^{\neq n}$.
	According to Lemma~\ref{SUPP:lemma:leverage-score-sampling}, $\Sbf$ is a leverage score sampling matrix for $(\Gbf_{[2]}^{\neq n}, \beta_n)$.
	Since $\Ubf$ has at most $R_{n-1} R_n$ columns and
	\begin{equation}
		J > \frac{16}{3 (\sqrt{2}-1)^2} \Big( \prod_{j = 1}^N R_j^2 \Big) \ln \Big(\frac{4 R_{n-1} R_n}{\delta} \Big),
	\end{equation}
	choosing $\varepsilon = 1 - 1/\sqrt{2}$ and $\eta = \delta/2$ in Lemma~\ref{SUPP:lemma:singular-value-bound} therefore gives that 
	\begin{equation} \label{SUPP:eq:cond-1-pf}
		\sigma_{\min}^2(\Sbf\Ubf) \geq 1/\sqrt{2}
	\end{equation}
	with probability at least $1-\delta/2$. 
	Similarly to Lemma~\ref{SUPP:lemma:structural-conds}, define $(\Xbf_{[n]}^\top)^\perp \defeq \Ubf^\perp (\Ubf^\perp)^\top \Xbf_{[n]}^\top$ and $\OPT \defeq \min_{\Ze} \| \Gbf_{[2]}^{\neq n} \Zbf_{(2)}^\top - \Xbf_{[n]}^\top \|_\F = \| (\Xbf_{[n]}^\top)^\perp \|_\F$.
	Since 
	\begin{equation}
		\qbf^{\neq n}(i) \geq \beta_n \frac{\ell_i(\Gbf_{[2]}^{\neq n})}{\rank(\Gbf_{[2]}^{\neq n})} = \beta_n \frac{\| \Ubf(i, :) \|_2^2}{\|\Ubf\|_\F^2} \;\;\;\; \text{for all } i \in [I],
	\end{equation}
	and $\Ubf^\top (\Xbf_{[n]}^\top)^\perp = \zerobf$, Lemma~\ref{SUPP:lemma:approx-mm} gives that 
	\begin{equation}
		\Eb \| \Ubf^\top \Sbf^\top \Sbf (\Xbf_{[n]}^\top)^\perp \|_\F^2 \leq \frac{1}{\beta_n J} \|\Ubf\|_\F^2 \| (\Xbf_{[n]}^\top)^\perp \|_\F^2 \leq \frac{R_{n-1} R_n}{\beta_n J} \OPT^2.
	\end{equation}
	By Markov's inequality,
	\begin{equation}
		\Pb(\| \Ubf^\top \Sbf^\top \Sbf (\Xbf_{[n]}^\top)^\perp \|_\F^2 
		> \varepsilon \OPT^2/2) \leq \frac{\Eb \| \Ubf^\top \Sbf^\top \Sbf (\Xbf_{[n]}^\top)^\perp \|_\F^2}{\varepsilon \OPT^2/2} \leq \frac{2}{\varepsilon J} \Big( \prod_{j \in [N]} R_j^2 \Big) < \frac{\delta}{2}
	\end{equation}
	since 
	\begin{equation}
		J > \frac{4}{\varepsilon\delta} \Big( \prod_{j=1}^N R_j^2 \Big).
	\end{equation}
	Consequently,
	\begin{equation} \label{SUPP:eq:cond-2-pf}
		\| \Ubf^\top \Sbf^\top \Sbf (\Xbf_{[n]}^\top)^\perp \|_\F^2 \leq \frac{\varepsilon}{2}\OPT^2
	\end{equation}
	with probability at least $1-\delta/2$. 
	By a union bound, it follows that both \eqref{SUPP:eq:cond-1-pf} and \eqref{SUPP:eq:cond-2-pf} are true with probability at least $1-\delta$. From Lemma~\ref{SUPP:lemma:structural-conds} it therefore follows that \eqref{SUPP:eq:main-statement} is true with probability at least $1-\delta$.
\end{proof}

\section{Detailed Complexity Analysis} \label{SUPP:sec:detailed-complexity}

We provide a detailed complexity analysis in this section to show how we arrived at the numbers in Table~\ref{table:complexity} in the main manuscript.

\subsection{TR-ALS} \label{SUPP:sec:TR-ALS-complexity}

In our calculations below, we refer to steps in Algorithm~\ref{alg:TR-ALS} in the main paper, consequently ignoring any cost associated with e.g.\ normalization and checking termination conditions.

Upfront costs of TR-ALS:
\begin{itemize}
	\item \textbf{Line~\ref{line:tr-als:initialize-cores}: Initializing cores.} 
	This depends on how the initialization of the cores is done. 
	We assume they are randomly drawn, e.g.\ from a Gaussian distribution, resulting in a cost $O(NIR^2)$.
\end{itemize}

Costs per outer loop iteration of TR-ALS:
\begin{itemize}
	\item \textbf{Line~\ref{line:tr-als:compute-subchain}: Compute unfolded subchain tensor.} 
	If the $N-1$ cores are dense and contracted in sequence, the cost is 
	\begin{equation}
		R^{3} (I^2 + I^3 + \cdots + I^{N-1}) \leq R^{3} (N I^{N-2} + I^{N-1}) \leq 2 R^{3} I^{N-1} = O(I^{N-1} R^3),
	\end{equation}
	where we use the assumption $N < I$ in the second inequality.
	Doing this for each of the $N$ cores in the inner loop brings the cost to $O(N I^{N-1} R^3)$.
	
	\item \textbf{Line~\ref{line:tr-als:ls}: Solve least squares problem.} 
	We consider the cost when using the standard QR-based approach described in Section~5.3.3 in the book by \citet{golub2013}.
	The matrix $\Gbf_{[2]}^{\neq n}$ is of size $I^{N-1} \times R^2$. 
	Doing a QR decomposition of this matrix costs $O(I^{N-1} R^4)$. 
	Updating the right hand sides and doing back substitution costs $O(I (I^{N-1} R^2 + R^4)) = O(I^N R^2)$, where we used the assumption that $R^2 < I$.
	The leading order cost for solving the least squares problem is therefore $O(I^N R^2)$.
	Doing this for each of the $N$ cores in the inner loop brings the cost to $O(N I^N R^2)$.
\end{itemize}

It follows that the overall leading order cost of TR-ALS is $NIR^2 + \noiter \cdot N I^N R^2$.

\subsection{rTR-ALS}

In our calculations below, we refer to steps in Algorithm~1 by \citet{yuan2019} with the TR decomposition step in their algorithm done using TR-ALS.

Cost of initial Tucker compression:
\begin{itemize}
	\item \textbf{Line~4: Draw Gaussian matrix.} Drawing these $N$ matrices costs $O(N I^{N-1} K)$.
	\item \textbf{Line~5: Compute random projection.} Computing $N$ projections costs $O(N I^N K)$.
	\item \textbf{Line~6: QR decomposition.} Computing the QR decomposition of an $I \times K$ matrix $N$ times costs $O(N I K^2)$.
	\item \textbf{Line~7: Compute Tucker core.} The cost of compressing each dimension of the input tensor in sequence is 
	\begin{equation}
		I^N K + I^{N-1} K^2 + I^{N-2} K^3 + \cdots + I K^N = O(N I^N K),
	\end{equation}
	where we assume that $K < I$.
\end{itemize}

Cost of TR decompositions:
\begin{itemize}
	\item \textbf{Line~9: Compute TR decomposition of Tucker core.} Using TR-ALS, this costs $O(N K R^2 + \noiter \cdot N K^N R^2)$ as discussed in Section~\ref{SUPP:sec:TR-ALS-complexity}.
	\item \textbf{Line~11: Compute large TR cores.} This costs in total $O(N I K R^2)$ for all cores.
\end{itemize}

Combining these costs and recalling the assumption $R^2 < I$, we get an overall leading order cost for rTR-ALS of $NI^N K + \noiter \cdot N K^N R^2$.

\subsection{TR-SVD}

In our calculations below, we refer to steps in Algorithm~1 by \citet{mickelin2020}.
We consider a modified version of this algorithm which takes a target rank $(R_1, \ldots, R_N)$ as input instead of an accuracy upper bound $\varepsilon$.
For simplicity, we ignore all permutation and reshaping costs, and focus only on the leading order costs which are made up by the SVD calculations.
\begin{itemize}
	\item \textbf{Line~3: Initial SVD.} 
	This is an economy sized SVD of an $I \times I^{N-1}$ matrix, which costs $O(I^{N+1})$; see the table in Figure~8.6.1 of \citet{golub2013} for details.
	
	\item \textbf{Line~10: SVD in for loop.} 
	The size of the matrix being decomposed will be $IR \times I^{N-k} R$. 
	The cost of computing an economy sized SVD of each for $k = 2,\ldots,N-1$ is
	\begin{equation}
		R^3 (I^N + I^{N-1} + \cdots + I^3) \leq 2 R^3 I^N,
	\end{equation}
	where we use the assumption that $N < I$.
\end{itemize}
Adding these costs up, we get a total cost of $O(I^{N+1} + I^N R^3)$.

\subsection{TR-SVD-Rand}

In our calculations below, we refer to steps in Algorithm~7 by \citet{ahmadi-asl2020}.
For simplicity, we ignore all permutation and reshaping costs, and focus only on the leading order costs.
In particular, note that the QR decompositions that come up are of relatively small matrices and therefore relatively cheap.
Moreover, we assume that the oversampling parameter $P$ is small enough to ignore.
\begin{itemize}
	\item \textbf{Line~2: Compute projection.} 
	The matrix $\Cbf$ is of size $I \times I^{N-1}$ and the matrix $\Omegabf$ is of size $I^{N-1} \times R^2$.
	The cost of computing their product is $O(I^N R^2)$.
	
	\item \textbf{Line~6: Computing tensor-times-matrix (TTM) product.} 
	$\Xe$ is an $N$-way tensor of size $I \times \cdots \times I$, and $\Qbf^{(1)}$ is of size $I \times R^2$, so this contraction costs $O(I^N R^2)$.
	
	\item \textbf{Line~11: Compute projections in for loop.} 
	Each of these costs $I^{N-n+1} R^3$.
	The total cost for all iterations for $n = 2, \ldots, N-1$ is therefore
	\begin{equation}
		R^3 (I^{N-1} + I^{N-2} + \cdots + I^2) \leq 2 R^3 I^{N-1},
	\end{equation}
	where we used the assumption that $N < I$.
	
	\item \textbf{Line~15: Compute TTM product in for loop.} 
	Each of these costs $I^{N-n+1} R^3$.
	The total cost for all iterations is therefore 
	\begin{equation}
		R^3 (I^{N-1} + I^{N-2} + \cdots + I^2) \leq 2 R^3 I^{N-1},
	\end{equation}
	where we used the assumption that $N < I$.
\end{itemize}
Adding these costs up, we get a total cost of $O(I^N R^2)$.

\subsection{TR-ALS-Sampled}

In our calculations below, we refer to steps in Algorithm~\ref{alg:TR-ALS-Sampled} in the main paper.

Upfront costs of TR-ALS-Sampled:
\begin{itemize}
	\item \textbf{Line~\ref{line:tr-als-sampled:initialize-cores}: Initializing cores.} 
	This is the same as for TR-ALS, namely $O(NIR^2)$.
	
	\item \textbf{Line~\ref{line:tr-als-sampled:compute-p}: Compute distributions.} 
	For each $n = 2,\ldots, N$, this involves computing the economic SVD of an $I \times R^2$ matrix for a cost $O(I R^4)$, and then computing $\pbf^{(n)}$ from the left singular vectors for a cost of $O(IR)$. 
	This yields a total cost for this line of $O(NIR^4)$.
	
\end{itemize}

We ignore the cost of the sampling in Line~\ref{line:tr-als-sampled:draw-sketch} since it is typically very fast.

Costs per outer loop iteration of TR-ALS-Sampled:
\begin{itemize}	
	\item \textbf{Line~\ref{line:tr-als-sampled:sample-G}: Compute sampled unfolded subchain.} 
	The main cost for this line is computing the product of a sequence of $N-1$ matrices of size $R \times R$, for each of the $J$ sampled slices (see Figure~\ref{fig:core-sampling} in the main paper).
	This costs $O(N R^3 J)$ per inner loop iteration, or $O(N^2 R^3 J)$ per outer loop iteration.
	
	\item \textbf{Line~\ref{line:tr-als-sampled:sample-X}: Sample input tensor.} 
	This step requires copying $O(I J)$ elements from the input tensor. 
	For one iteration of the outer loop, this is $O(N I J)$ elements.
	In practice this step together with the least squares solve are the two most time consuming since it involves sampling from a possibly very large array.
	
	\item \textbf{Line~\ref{line:tr-als-sampled:ls}: Solve least squares problem.}
	The cost is computed in the same way as the least squares solve in TR-ALS, but with the large dimension $I^{N-1}$ replaced by $J$.
	The cost per least squares problem is therefore $O(IJR^2)$, or $O(N I J R^2)$ for one iteration of the outer loop.
	
	\item \textbf{Line~\ref{line:tr-als-sampled:update-p}: Update distributions.}
	For one iteration of the outer loop this costs $O(NIR^4)$.
\end{itemize}

Adding these costs and simplifying, we arrive at a cost 
\begin{equation} \label{SUPP:eq:tr-als-sampled-general-complexity}
	O(N I R^4 + \noiter \cdot N I J R^2).
\end{equation}
If $\varepsilon$ and $\delta$ are small enough, then (\ref{eq:J-bound-2}) simplifies to $J > 4 R^{2N} / (\varepsilon \delta)$. 
Plugging this into \eqref{SUPP:eq:tr-als-sampled-general-complexity} gives us the complexity in Table~\ref{table:complexity}.

\section{Links to Datasets} \label{SUPP:sec:links-to-datasets}

\begin{itemize}
	\item The Pavia University dataset was downloaded from\\ \url{http://lesun.weebly.com/hyperspectral-data-set.html}.
	
	\item The Washington DC Mall dataset was downloaded from\\
	\url{https://engineering.purdue.edu/~biehl/MultiSpec/hyperspectral.html}.
	
	\item The Park Bench video was downloaded from\\
	\url{https://www.pexels.com/video/man-sitting-on-a-bench-853751}.\\
	The video, which is in color, was made into grayscale by averaging the three color bands.
	
	\item The Tabby Cat video was downloaded from\\
	\url{https://www.pexels.com/video/video-of-a-tabby-cat-854982/}.\\
	The video, which is in color, was made into grayscale by averaging the three color bands.
	
	\item The Red Truck images are part of the COIL-100 dataset, which was downloaded from\\
	\url{https://www1.cs.columbia.edu/CAVE/software/softlib/coil-100.php}.
\end{itemize}

\section{Additional Experiment Details} \label{SUPP:sec:additional-experiment-details}

Here we provide additional experiment details, including how the number of ALS iterations and appropriate sketch rates are determined for the experiments in Section~\ref{sec:experiments-decomposition} of the main paper.

\subsection{Randomly Generated Data}

\paragraph{First Experiment}
When determining the number of ALS iterations, TR-ALS is run until the change in relative error is below 1e\textminus6 or for a maximum of 500 iterations, whichever is satisfied first.
The sample size $J$ for TR-ALS-Sampled is started at 200 and incremented by 100.
The embedding dimension $K$ for rTR-ALS is started at $I/10$ and incremented by $I/20$.
Incrementation is done until the error for each method is smaller than 1.2 times the TR-ALS error. 

\paragraph{Second Experiment}
The sketch rate $J$ for TR-ALS-Sampled is now incremented by 1000.  
Incrementation is done until the error for each method is smaller than 1.02 times the TR-ALS error. 
A smaller factor is used compared to the first experiment since the TR-ALS error is much larger.
The other settings remain the same as in the first experiment.

\begin{remark}[Performance of SVD-based methods] \label{SUPP:remark:SVD-performance}
	The SVD-based methods typically require much higher ranks than the ALS-based methods.
	To achieve a similar error to the ALS-based methods in Figures~\ref{fig:experiment-1a} (a) and \ref{fig:experiment-1b} (a) (0.0031 and 0.94, respectively) the \emph{original unaltered implementation} of TR-SVD by \citet{mickelin2020} requires average TR ranks (i.e., $(R_1+R_2+R_3)/3$) in the range of 82--233 and 35--84, respectively.
	TR-SVD-Rand does poorly for the same reason.
\end{remark}

\begin{remark}[Empirical vs.\ theoretical complexity] \label{SUPP:remark:empirical-vs-theoretical-complexity}
	As discussed in Section~\ref{sec:complexity-analysis}, the benefit of our proposed method is that it has a lower complexity than the competing methods.
	In particular, it avoids the $I^N$ factors in the complexity expression.
	It may therefore seem surprising that our method is not the fastest in the experiments.
	For example, in Figure~\ref{fig:experiment-1a} (b) our method is always slower than TR-SVD-Rand, and in Figure~\ref{fig:experiment-1b} (b) it is always slower than both TR-SVD and TR-SVD-Rand.
	The reason for this seeming discrepancy is that we only consider a small range of $I$ values ($I \in [100, 500]$) in those figures and therefore the plots will not necessarily reflect the leading order complexities that are given in Table~\ref{table:complexity}.
	The main cost in TR-SVD-Rand is matrix multiplication which is very efficient, so the hidden constant in the complexity for TR-SVD-Rand is small, and this helps explain why it is faster than our method for the relatively small $I$ values used in Figures~\ref{fig:experiment-1a} and \ref{fig:experiment-1b}.
	Similar comments also apply to the other experiments. 
\end{remark}

\subsection{Highly Oscillatory Functions}

To determine the number of iterations for the ALS-based methods, we run TR-ALS until the change in relative error is less than 1e\textminus3 or for a maximum of 100 iterations, whichever is satisfied first.
The sample size $J$ for TR-ALS-Sampled is started at $2R^2$ and incremented by 100.
The embedding dimension $K$ for rTR-ALS is started at $2$ and incremented by $1$.
If $K \geq I_n$, no compression is applied to the $n$th dimension.
The incrementation is done until the error for each method is smaller than 1.1 times the TR-ALS error.

\subsection{Image and Video Data} \label{SUPP:sec:additional-details-image-and-video}

To determine the number of iterations for the ALS-based methods, we run TR-ALS until the change in relative error is less than 1e\textminus3 or for a maximum of 100 iterations, whichever is satisfied first.
The sample size $J$ for TR-ALS-Sampled is started at $2R^2$ and incremented by 1000.
The embedding dimension $K$ for rTR-ALS is started at $\max_{n \in [N]} I_n / 10$ and incremented by $\max_{n \in [N]} I_n / 20$.
If $K \geq I_n$, no compression is applied to the $n$th dimension.
The incrementation is done until the error for each method is smaller than 1.1 times the TR-ALS error.

In the experiment on the reshaped tensors, each mode (except the mode representing color channels in the Red Truck dataset) of the original tensor is split into two new modes.
Some datasets are also truncated somewhat to allow for this reshaping.
The details are given below.
\begin{itemize}
	\item \textbf{Pavia Uni.}\ is first truncated to size $600 \times 320 \times 100$. 
	This is done by discarding the last elements in each mode via \verb|X = X(1:600, 1:320, 1:100)| in Matlab.
	The tensor is then reshaped into a $24 \times 25 \times 16 \times 20 \times 10 \times 10$ tensor.
	This is done by splitting each original mode into two modes via \verb|X = reshape(X,24,25,16,20,10,10)| in Matlab.
	
	\item \textbf{DC Mall} is truncated to size $1280 \times 306 \times 190$ and then reshaped into a $32 \times 40 \times 18 \times 17 \times 10 \times 19$ tensor similarly to how the Pavia Uni.\ dataset is truncated and reshaped.
	
	\item \textbf{Park Bench} does not require any truncation. 
	The original tensor is reshaped into a $24 \times 45 \times 32 \times 60 \times 28 \times 13$ tensor similarly to how the Pavia Uni.\ dataset is reshaped.
	
	\item \textbf{Tabby Cat} does not require any truncation. 
	The original tensor is reshaped into a $16 \times 45 \times 32 \times 40 \times 13 \times 22$ tensor similarly to how the Pavia Uni.\ dataset is reshaped.
	
	\item \textbf{Red Truck} does not require any truncation.
	The original tensor is reshaped into a $8 \times 16 \times 8 \times 16 \times 3 \times 8 \times 9$ tensor.
	Here, all modes have been split into two modes, except for the mode corresponding to the three color channels which is left as it is.
	This is done via \verb|X = reshape(X,8,16,8,16,3,8,9)| in Matlab.
	
\end{itemize}
Detailed results for the experiments on the reshaped tensors are shown in Table~\ref{SUPP:tab:reshaped-results}.

\sisetup{round-mode=places, table-number-alignment=right, table-text-alignment=right}
\begin{table}[ht!]
	\centering
	\caption{
		Decomposition results for reshaped real datasets with target rank $R=10$.
		The SVD-based methods cannot handle any of these reshaped datasets since they require $R_0 R_1 \leq I_1$.
		The ALS-based methods all fail on the reshaped Park Bench dataset due to Matlab running out of memory.
		Time is in seconds. 
	} 
	\label{SUPP:tab:reshaped-results}
	\begin{tabular}{
			l
			S[round-precision=2, table-figures-decimal=2, table-figures-integer=1]
			S[round-precision=1, table-figures-decimal=1, table-figures-integer=4]
			S[round-precision=2, table-figures-decimal=2, table-figures-integer=1]
			S[round-precision=1, table-figures-decimal=1, table-figures-integer=4]
			S[round-precision=2, table-figures-decimal=2, table-figures-integer=1]
			S[round-precision=1, table-figures-decimal=1, table-figures-integer=4]
			S[round-precision=2, table-figures-decimal=2, table-figures-integer=1]
			S[round-precision=1, table-figures-decimal=1, table-figures-integer=4]
			S[round-precision=2, table-figures-decimal=2, table-figures-integer=1]
			S[round-precision=1, table-figures-decimal=1, table-figures-integer=3]
		}
		\toprule
		& \multicolumn{2}{c}{Pavia Uni.} & \multicolumn{2}{c}{DC Mall} & \multicolumn{2}{c}{Park Bench} & \multicolumn{2}{c}{Tabby Cat} & \multicolumn{2}{c}{Red Truck}\\
		\cmidrule(lr){2-3}
		\cmidrule(lr){4-5}
		\cmidrule(lr){6-7}
		\cmidrule(lr){8-9}
		\cmidrule(l){10-11}
		Method 				 & {Error} & {Time} & {Error} & {Time}  & {Error} & {Time}  & {Error} & {Time}  & {Error} & {Time}  \\
		\midrule
		TR-ALS 				 & 0.28 & 1372.2 & 0.29 & 3947.7 & \xmark & \xmark & 0.15 & 6629.0 & 0.25 & 546.3 \\
		rTR-ALS 			 & 0.31 &  944.7 & 0.31 & 2575.0 & \xmark & \xmark & 0.17 & 3416.4 & 0.26 & 423.5 \\
		TR-ALS-S. (proposal) & 0.31 &    3.4 & 0.31 &    5.8 & \xmark & \xmark & 0.17 &    2.3 & 0.27 &   5.8 \\
		\midrule
		TR-SVD 				 & \xmark & \xmark & \xmark & \xmark & \xmark & \xmark & \xmark & \xmark & \xmark & \xmark \\
		TR-SVD-Rand 		 & \xmark & \xmark & \xmark & \xmark & \xmark & \xmark & \xmark & \xmark & \xmark & \xmark \\
		\bottomrule
	\end{tabular}
\end{table}

\subsection{Rapid Feature Extraction for Classification} \label{SUPP:sec:additional-details-feature-extraction}

The ALS-based algorithms all run until the change in relative error is below 1e\textminus4.
The embedding dimension $K$ for rTR-ALS is 20 for the first and second modes, and 200 for the fourth mode. 
No compression is applied to the third mode since it is already so small.
TR-ALS-Sampled uses the sketch rate $J = 1000$.

\end{document}